\let\@wraptoccontribs\wraptoccontribs
\renewcommand*\env@matrix[1][*\c@MaxMatrixCols c]{%
  \hskip -\arraycolsep
  \let\@ifnextchar\new@ifnextchar
  \array{#1}}
\def\Gal{\mathop{\rm Gal}\nolimits}
\def\ord{{\rm ord}}
\def\CVD{{\hfill\hfil{\lower 2pt\hbox{\vrule\vbox to 7pt
{\hrule width  5pt\varphifill\hrule}\varphirule}}}\par}
\newcommand{\mysetminus}{\mathbin{\fgebackslash}}
\newtheorem{theorem}{Theorem}[section]
\newtheorem{lemma}[theorem]{Lemma}
\newtheorem{proposition}[theorem]{Proposition}
\newtheorem{proposition-definition}[theorem]{Proposition-Definition}
\newtheorem{corollary}[theorem]{Corollary}
\theoremstyle{definition}
\newtheorem{definition}[theorem]{Definition}
\theoremstyle{remark}
\newtheorem{remark}{Remark}
\theoremstyle{theorem}
\theoremstyle{remark}
\newcommand*{\Scale}[2][4]{\scalebox{#1}{$#2$}}%
\title[]{Dynamical Diophantine Approximation Exponents in Characteristic $p$}
\author{Wade Hindes}
\begin{document}
\subjclass[2010]{Primary 37P05, 37P25, 11J61. Secondary 37P50, 11R58}  
\maketitle
\begin{abstract} Let $\phi(z)$ be a non-isotrivial rational function in one-variable with coefficients in $\overline{\mathbb{F}}_p(t)$ and assume that $\gamma\in\mathbb{P}^1(\overline{\mathbb{F}}_p(t))$ is not a post-critical point for $\phi$. Then we prove that the diophantine approximation exponent of elements of $\phi^{-m}(\gamma)$ are eventually bounded above by $\lceil d^m/2\rceil+1$. To do this, we mix diophantine techniques in characteristic $p$ with the adelic equidistribution of small points in Berkovich space. As an application, we deduce a form of Silverman's celebrated limit theorem in this setting. Namely, if we take any wandering point $a\in\mathbb{P}^1(\overline{\mathbb{F}}_p(t))$ and write $\phi^n(a)=a_n/b_n$ for some coprime polynomials $a_n,b_n\in\overline{\mathbb{F}}_p[t]$, then we prove that \vspace{.1cm} 
\[ \frac{1}{2}\leq \liminf_{n\rightarrow\infty} \frac{\deg(a_n)}{\deg(b_n)} \leq\limsup_{n\rightarrow\infty} \frac{\deg(a_n)}{\deg(b_n)}\leq2, \vspace{.1cm} \] 
whenever $0$ and $\infty$ are both not post-critical points for $\phi$. In characteristic $p$, the Thue-Siegel-Dyson-Roth theorem is false, and so our proof requires different techniques than those used by Silverman.        
\end{abstract}  
\section{Introduction}
Given an irrational number $\alpha\in\mathbb{R}$, it is a classical problem in arithmetic to study how well $\alpha$ can be approximated by rational numbers relative to their complexity. More precisely, one may try to bound the \emph{diophantine approximation exponent} of $\alpha$ given by
\begin{equation}\label{exponent-reals} 
E(\alpha):=\limsup_{h(r)\rightarrow\infty}\frac{-\log|\alpha-r|}{h(r)}.
\end{equation} 
Here $h(r)$ is the usual Weil height on $r\in\mathbb{Q}$ and $|\cdot|$ is the usual absolute value on $\mathbb{R}$. This is particularly useful in number theory when $\alpha$ is algebraic. For instance, Liouville showed that $E(\alpha)\leq d(\alpha):=[\mathbb{Q}(\alpha):\alpha]$ in this case, and many non-trivial arithmetic statements follow from improvements on the Liouville bound (e.g., the finiteness of integral points on curves). In keeping with this philosophy in the dynamical setting, Silverman \cite{SilvInt} used a generalization of Roth's Theorem (i.e., that $E(\alpha)=2$ for all algebraic irrationals) to prove the following beautiful theorem on the relative growth rate of the numerator and denominator of points in orbits. Recall that for a rational function $\phi(z)\in\mathbb{Q}(z)$, a point $P\in\mathbb{P}^1(\mathbb{Q})$ is called \emph{wandering} if $\{\phi^n(P)\}_{n\geq1}$ is infinite and called \emph{non-exceptional} if $\{\phi^{-n}(P)\}_{n\geq1}$ is infinite.   
\medskip 
\begin{theorem}\label{thm:silv}{(Silverman's Limit Theorem)} Let $\phi\in\mathbb{Q}(z)$ have degree at least $2$, let $a\in\mathbb{P}^1(\mathbb{Q})$ be a wandering point for $\phi$, and let $\phi^n(a)=a_n/b_n$ for some coprime $a_n,b_n\in\mathbb{Z}$. Then 
\[\lim_{n\rightarrow\infty} \frac{\log|a_n|}{\log|b_n|}=1,\]
whenever $0$ and $\infty$ are both non-exceptional points for $\phi$.\vspace{.25cm} 
\end{theorem}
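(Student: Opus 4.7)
The plan is to combine Roth's theorem (the statement $E(\beta)=2$ for every $\beta\in\overline{\mathbb{Q}}$ of degree at least $2$) with the functional equation of the canonical height, applied along the iterated backward orbit of $\infty$ (and symmetrically of $0$).

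First I would reduce the statement to a bound on how close $\phi^n(a)$ gets to $\infty$. Writing $h(\phi^n(a))=\log\max(|a_n|,|b_n|)$ and using the standard identity $h(\phi^n(a))=d^n\hat h_\phi(a)+O(1)$ with $\hat h_\phi(a)>0$ (since $a$ is wandering), the claim $\log|a_n|/\log|b_n|\to 1$ is equivalent to $\log|\phi^n(a)|/h(\phi^n(a))\to 0$. Conjugating $\phi$ by $z\mapsto 1/z$ exchanges $0$ and $\infty$ in the hypothesis and $a_n\leftrightarrow b_n$, so by symmetry it suffices to prove
\[
\frac{\log^+|\phi^n(a)|}{h(\phi^n(a))}\longrightarrow 0.
\]

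Next, fix $\eta>0$ and use non-exceptionality of $\infty$ to produce a ``Roth-witness'' of $\infty$: an integer $N$ and an enumeration $\phi^{-N}(\infty)=\{\beta_1,\dots,\beta_{k_N}\}$ such that every ramification index $e_i=e_{\beta_i}(\phi^N)$ satisfies $e_i/d^N<\eta$. Since $\sum_i e_i=d^N$ with $k_N\to\infty$, and since only preimages whose pre-orbit meets the finite critical set of $\phi$ carry nontrivial ramification, this is arranged by taking $N$ large. On a neighborhood of $\beta_i$ the local factorization $\phi^N(z)=u_i(z)(z-\beta_i)^{-e_i}$ with $u_i$ holomorphic and nonvanishing gives
\[
-\log|\phi^{n-N}(a)-\beta_i|\ \geq\ \frac{1}{e_i}\log|\phi^n(a)|-O(1)
\]
whenever $\phi^{n-N}(a)$ lies close to $\beta_i$.

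Roth's theorem applied to each algebraic $\beta_i$ of degree $\geq 2$ yields $-\log|\phi^{n-N}(a)-\beta_i|\leq(2+\eta)\,h(\phi^{n-N}(a))+O_i(1)$ (for rational $\beta_i$ the trivial Liouville bound is even stronger), and combining with $h(\phi^{n-N}(a))=h(\phi^n(a))/d^N+O(1)$ and selecting for each $n$ the closest preimage $\beta_{i(n)}$ gives
\[
\log|\phi^n(a)|\ \leq\ (2+\eta)\frac{e_{i(n)}}{d^N}\,h(\phi^n(a))+O(1)\ <\ 3\eta\,h(\phi^n(a))+O(1),
\]
which finishes the proof since $\eta$ was arbitrary. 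The main obstacle is the selection step controlling the ramification indices $e_i$ uniformly over $i$: this is automatic when $\infty$ is not post-critical (all $e_i=1$), but in the post-critical case one must carefully count the preimages of $\infty$ whose forward orbits meet the critical set and show, using a pigeonhole argument based on $k_N\to\infty$, that their fractional ramification can still be absorbed into the error. The same argument applied to $0$ (via $z\mapsto 1/z$) handles the other half, completing the bound on $\log|a_n|/\log|b_n|$.
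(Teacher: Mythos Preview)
The paper does not give its own proof of Theorem~\ref{thm:silv}: it is quoted as background, attributed to Silverman \cite{SilvInt}. So there is no proof in the paper to compare against directly. That said, your sketch is precisely Silverman's strategy, and it is also the template the paper adapts for its characteristic-$p$ analogue (Theorem~\ref{thm:limit} and Corollary~\ref{cor:a_nandb_n}), with Roth replaced by the weaker Osgood--Voloch bound and Lemma~\ref{lem:inv} playing the role of your local factorization step.

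Your outline is essentially correct, but the one genuine gap is the ramification control. You assert that for $\infty$ non-exceptional one can choose $N$ with $\max_i e_i/d^N<\eta$, and justify this by ``$\sum_i e_i=d^N$, $k_N\to\infty$, and pigeonhole.'' That is not enough: those two facts alone do not prevent a single $e_{i_0}$ from carrying almost all of the mass $d^N$ (take $e_{i_0}=d^N-k_N+1$ and all other $e_i=1$). The actual argument, which is a lemma in \cite{SilvInt}, runs as follows. The quantity $c_N:=\max_i e_i/d^N$ is non-increasing in $N$ by multiplicativity of ramification. If $c_N\geq c>0$ for all $N$, then along the length-$N$ forward path from a maximizing $\beta_N$ to $\gamma$, at most $O_c(1)$ of the points can fail to be totally ramified (since each such failure multiplies the running product by at most $(d-1)/d$). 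There are at most two totally ramified points of $\phi$, so for large $N$ these must recur, forcing them to form a $\phi$-cycle through which the path passes and whose forward orbit contains $\gamma$; this cycle is then completely invariant, i.e.\ exceptional, contradicting the hypothesis on $\gamma$. You should replace the pigeonhole sentence with this argument.

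It is worth noting, by way of comparison, that the paper sidesteps this lemma entirely in characteristic $p$ by imposing the stronger hypothesis that $\gamma$ is not post-critical, so that all $e_i=1$. As Remark~\ref{rem:sketch} explains, this is forced: the Osgood--Voloch bound improves on Liouville only by a factor of $1/2$, and any ramification $e_i\geq 2$ would destroy the gain. Over $\mathbb{Q}$, Roth's exponent $2+\epsilon$ leaves enough room that the weaker non-exceptionality hypothesis suffices---provided the ramification lemma above is proved properly.
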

\begin{remark} Of course, the non-exceptionality of $0$ and $\infty$ can be made explicit; it is equivalent to the condition that neither $\phi^2$ nor $1/\phi^2(1/x)$ is a polynomial. However, we prefer the first characterization since it connects more directly with our results to come.      
\end{remark} 
    

In particular, given the close relationship between number fields and global function fields, one can ask if a similar statement holds if we replace $\mathbb{Q}$ with the field $\mathbb{F}_p(t)$. However, a significant complication arises immediately: the Liouville bound (which is still valid in this setting by work of Mahler \cite{Mahler}) can be strict. Nevertheless, non-trivial improvements have been made \cite{DeMathan2,Osgood,Voloch} for ``generic" algebraic functions $\alpha\in\overline{\mathbb{F}_p(t)}$, and one can ask if it is possible to leverage these improvements in the dynamical setting. We show that the answer to this question is yes, and prove that all iterated preimages $\alpha\in\phi^{-m}(\gamma)$ of a given point $\gamma$ under a non-isotrivial map $\phi$, either have small algebraic degree (relative to $m$) or else have diophantine approximation exponent strictly smaller than the Liouville bound. Recall that a rational map $\phi(z)$ defined over a function field $K/k(t)$ is called \emph{isotrivial} if there is a change of coordinates on $\mathbb{P}^1(\bar{K})$ over which $\phi$ is defined over $\bar{k}$. Likewise, a point $\gamma\in\mathbb{P}^1(K)$ is called \emph{post-critical} for $\phi$ if its forward orbit $\{\phi^n(\gamma)\}_{n\geq1}$ contains a critical point for $\phi$; see also Remark \ref{rem:ramification}. Finally, we define the dynamical approximation exponent $E(\alpha)$ of an algebraic function $\alpha\in\overline{\mathbb{F}_p(t)}$ in an analogous way to \eqref{exponent-reals} by extending the absolute value on $\overline{\mathbb{F}}_p(t)$ given by the order vanishing at infinity; see Section \ref{sec:diophantine} for details. In particular, we prove the following non-trivial upper bound on the diophantine approximation exponent for iterated preimeges:     
\medskip       

\begin{theorem}\label{thm:eventualExp} Let $\phi\in \overline{\mathbb{F}}_p(t)(z)$ be a non-isotrivial map of degree $d\geq2$, let $v=\ord_\infty(\cdot)$ be the valuation on $\overline{\mathbb{F}}_p(t)$ given by order vanishing at infinity, and assume that $\gamma\in\mathbb{P}^1(\overline{\mathbb{F}}_p(t))$ is not a post-critical point for $\phi$. Then \[E_v(\alpha)\leq \lceil d^m/2\rceil+1\] 
for all $\alpha\in\phi^{-m}(\gamma)\mysetminus \mathbb{P}^1(\overline{\mathbb{F}}_p(t))$ and all $m\gg0$.   
\end{theorem}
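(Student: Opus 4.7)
The plan is to combine a characteristic-$p$ improvement of Liouville's bound with the adelic equidistribution of small points on the Berkovich projective line. Call $\alpha\in\overline{\mathbb{F}_p(t)}$ \emph{Frobenius-rigid} if $\alpha^{p^e}$ is a M\"obius function of $\alpha$ over $\overline{\mathbb{F}}_p(t)$ for some $e\geq 1$; equivalently, $\alpha$ satisfies a non-trivial equation $Az^{p^e+1}+Bz^{p^e}+Cz+D=0$ with $A,B,C,D\in\overline{\mathbb{F}}_p(t)$. This (up to minor variations) is the exceptional class in the Osgood-Voloch refinements of Liouville's bound, which yield
\[ E_v(\alpha)\leq \lceil d(\alpha)/2\rceil+1 \]
whenever $d(\alpha):=[\overline{\mathbb{F}}_p(t)(\alpha):\overline{\mathbb{F}}_p(t)]\geq 2$ and $\alpha$ is not Frobenius-rigid. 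Since any $\alpha\in\phi^{-m}(\gamma)$ satisfies $\phi^m(z)=\gamma$, which after clearing denominators is a polynomial equation of degree at most $d^m$, we have $d(\alpha)\leq d^m$, so the desired bound follows immediately for any non-Frobenius-rigid $\alpha$.

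For a Frobenius-rigid $\alpha$ only the Liouville bound $E_v(\alpha)\leq d(\alpha)$ is available, which still implies the theorem whenever $d(\alpha)\leq\lceil d^m/2\rceil+1$. The only problematic configuration is therefore a Frobenius-rigid $\alpha\in\phi^{-m}(\gamma)$ with $d(\alpha)>\lceil d^m/2\rceil+1$, and the remaining task is to show that no such $\alpha$ exists once $m$ is large.

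Here is where non-isotriviality and Berkovich equidistribution enter. Suppose, towards a contradiction, that along an infinite subsequence there exist Frobenius-rigid $\alpha_m\in\phi^{-m}(\gamma)$ with $d(\alpha_m)>d^m/2$. Because Frobenius commutes with the Galois action of $\Gal(\overline{\mathbb{F}}_p(t)^{\sep}/\overline{\mathbb{F}}_p(t))$, every Galois conjugate of $\alpha_m$ satisfies the same Frobenius-rigid equation; the Galois orbit of $\alpha_m$ therefore lies in the zero set of the rigidity polynomial, which has at most $p^{e_m}+1$ roots. Combined with $d(\alpha_m)>d^m/2$ this forces $p^{e_m}\geq d^m/2-1$, hence $e_m\to\infty$. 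Since $\gamma$ is not post-critical, $\phi^m$ is unramified at each preimage, so the preimages are separable with $|\phi^{-m}(\gamma)|=d^m$, and the canonical heights satisfy $\hat h_\phi(\alpha_m)=\hat h_\phi(\gamma)/d^m\to 0$. The Galois orbits of $\alpha_m$ are thus a sequence of small points of growing degree, and by the adelic equidistribution theorem of Baker-Rumely / Favre-Rivera-Letelier / Chambert-Loir on the Berkovich projective line at $v$, their normalized counting measures converge weak-$\ast$ to the canonical measure $\mu_{\phi,v}$. Passing to the limit through the Frobenius-rigid relation should pin $\mu_{\phi,v}$ down to an ``inseparably rigid'' locus, which is incompatible with the non-triviality of $\mu_{\phi,v}$ forced by non-isotriviality.

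The principal technical obstacle is precisely this last step: the coefficients and Frobenius exponents $e_m$ in the rigidity equations vary with $m$ and grow unboundedly in complexity, so a single limiting algebraic constraint is not available for free. My expectation is that one must combine a quantitative form of Berkovich equidistribution (in the spirit of Favre-Rivera-Letelier) with an effective bound on the ``complexity'' of the minimal Frobenius-rigid equation satisfied by $\alpha_m$ in terms of $d(\alpha_m)$, in order to extract a limiting constraint on $\mu_{\phi,v}$ strong enough to contradict non-isotriviality of $\phi$.
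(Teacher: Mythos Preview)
Your setup is correct through the point where you isolate the problematic case: a Frobenius-rigid $\alpha_m\in\phi^{-m}(\gamma)$ with $d(\alpha_m)>d^m/2$, together with the observation that the Galois orbits of such $\alpha_m$ form an equidistributing sequence of small points. But the proof stops there. You yourself flag the final step as ``the principal technical obstacle'' and offer only an expectation that some quantitative equidistribution combined with complexity bounds on the rigidity equation should yield a contradiction. As it stands, that step is missing, and it is not clear that your proposed route---tracking the varying equations $A_mz^{p^{e_m}+1}+B_mz^{p^{e_m}}+C_mz+D_m=0$ with $e_m\to\infty$ through a weak-$\ast$ limit---can be made to work without substantial new input.

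The paper avoids this obstacle entirely by switching from the Frobenius-equation description of the exceptional set to the equivalent \emph{cross-ratio} description: the Osgood--Voloch bound fails only when every cross-ratio of four conjugates of $\alpha$ lies in $\overline{\mathbb{F}}_p$. This reformulation turns the equidistribution step into a purely local, qualitative statement on the Berkovich line: at a place $v$ of genuine bad reduction (which exists by non-isotriviality), the Julia set is perfect, and a short argument with the Hsia kernel shows that any sequence of finite sets equidistributing to $\mu_{\phi,v}$ must, for all large $m$, contain four points whose cross-ratio has absolute value $\neq 1$, hence is non-constant. No control on the growth of $e_m$ or on the coefficients of the rigidity equations is needed. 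The upshot is that for $m\gg0$ every $\alpha\in\phi^{-m}(\gamma)$ with $d(\alpha)>d^m/2$ already has four conjugates with non-constant cross-ratio, so Osgood--Voloch applies directly, and the small-degree case is handled by Liouville exactly as you do.
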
 

\medskip 

As an application, we combine the result above with local analysis to prove the following weakened version of Silverman's Limit Theorem in characteristic $p$: 

\medskip 

\begin{theorem}\label{thm:basic} Let $\phi\in \overline{\mathbb{F}}_p(t)(z)$ be non-isotrivial of degree at least two, let $a\in\mathbb{P}^1(\overline{\mathbb{F}}_p(t))$ be a wandering point for $\phi$, and let $\phi^n(a)=a_n/b_n$ for some coprime $a_n,b_n\in\overline{\mathbb{F}}_p[t]$. Then 
\[ \frac{1}{2}\leq \liminf_{n\rightarrow\infty} \frac{\deg(a_n)}{\deg(b_n)} \leq\limsup_{n\rightarrow\infty} \frac{\deg(a_n)}{\deg(b_n)}\leq2, \vspace{.1cm} \] 
whenever $0$ and $\infty$ are both not post-critical points for $\phi$. \vspace{.25cm}
\end{theorem}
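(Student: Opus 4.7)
The strategy is to deduce both inequalities from Theorem~\ref{thm:eventualExp}, applied to $\gamma=\infty$ for the upper bound and to $\gamma=0$ for the lower bound. The two arguments run in parallel, so I focus on the upper bound.

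To translate the degree ratio into a statement about $v=\ord_\infty$, set $h_n:=h(\phi^n(a))=\max(\deg a_n,\deg b_n)$; the identity $-v(\phi^n(a))=\deg(a_n)-\deg(b_n)$ shows that if $\deg(a_n)/\deg(b_n)>2+\eta$ for some $\eta>0$, then $-v(\phi^n(a))>c(\eta)\,h_n$ with $c(\eta):=(1+\eta)/(2+\eta)>\tfrac{1}{2}$. Arguing by contradiction, I suppose such an $\eta>0$ exists along an infinite subsequence of $n$'s.

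Next, fix a large integer $m$ (to be chosen later). Because $\infty$ is not post-critical, each $\beta\in\phi^{-m}(\infty)$ is unramified for $\phi^m$, so sufficiently small disjoint $v$-adic neighborhoods $U_\beta$ are mapped bijectively by $\phi^m$ onto a common neighborhood of $\infty$, and the local Taylor expansion at each $\beta$ gives $v(P-\beta)=-v(\phi^m(P))+O_m(1)$ for $P\in U_\beta$ (after a coordinate change if $\beta=\infty$). For $n$ large, $\phi^n(a)$ lies in this neighborhood of $\infty$, so $\phi^{n-m}(a)\in U_{\beta_n}$ for a unique $\beta_n\in\phi^{-m}(\infty)$; combined with the standard canonical height estimate $h_n=d^m\,h(\phi^{n-m}(a))+O(1)$, this yields
\[ v(\phi^{n-m}(a)-\beta_n)\;>\;c(\eta)\,d^m\,h(\phi^{n-m}(a))\;-\;O_m(1). \]
By pigeonhole over the finite set $\phi^{-m}(\infty)$, some $\beta$ serves as $\beta_n$ for infinitely many $n$; fix it.

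To close the contradiction I bound $v(\phi^{n-m}(a)-\beta)$ from above. If $\beta\notin\mathbb{P}^1(\overline{\mathbb{F}}_p(t))$, Theorem~\ref{thm:eventualExp} gives $v(\phi^{n-m}(a)-\beta)\leq(\lceil d^m/2\rceil+1+\epsilon)\,h(\phi^{n-m}(a))$ for $m\gg0$ and $n$ large; if instead $\beta\in\mathbb{P}^1(\overline{\mathbb{F}}_p(t))$, the trivial Liouville-type inequality $v(f)\leq h(f)$ valid for $f\in\overline{\mathbb{F}}_p(t)$ gives $v(\phi^{n-m}(a)-\beta)\leq h(\phi^{n-m}(a))+O(1)$. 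Either way, comparison with the lower bound forces $c(\eta)\,d^m\leq\lceil d^m/2\rceil+1+\epsilon+o(1)$, which contradicts $c(\eta)>\tfrac{1}{2}$ once $m$ is chosen large enough that $d^m(c(\eta)-\tfrac{1}{2})>2$ (possible since $d\geq 2$ and $\eta>0$). The lower bound is proved identically with $\gamma=0$ in place of $\gamma=\infty$, using that $0$ is not post-critical; the relevant valuation is then $v(\phi^n(a))=\deg(b_n)-\deg(a_n)$, and the local expansion at unramified preimages of $0$ is $v(\phi^m(P))=v(P-\beta)+O_m(1)$. The main obstacle I anticipate is the local-to-global bookkeeping in the third paragraph: one must ensure the $U_\beta$'s are disjoint and small enough for the Taylor expansion to be exact, while keeping the implicit $O_m(1)$ constants from overwhelming the increasingly tight comparison between $c(\eta)\,d^m$ and $\lceil d^m/2\rceil$ as $m\to\infty$.
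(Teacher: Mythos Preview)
Your proposal is correct and follows essentially the same strategy as the paper: pull back via unramified preimages of $\gamma\in\{0,\infty\}$ (possible because $\gamma$ is not post-critical), apply the diophantine approximation bound of Theorem~\ref{thm:eventualExp} (or the trivial bound when the preimage is $K$-rational), and compare $c(\eta)\,d^m$ against $\lceil d^m/2\rceil+1$ for $m$ large. The paper packages this slightly differently---it first proves a general estimate $\limsup_n \lambda_v(\phi^n(a),\gamma)/h(\phi^n(a))\leq 1/2$ (Theorem~\ref{thm:limit}) using the chordal proximity function $\lambda_v$, which handles the point at infinity uniformly and absorbs your ``coordinate change if $\beta=\infty$'' into the formalism, and then deduces the degree-ratio bounds---but the underlying mechanism is the same, and your anticipated bookkeeping issues are exactly what the paper's Lemmas~\ref{lem:inv} and~\ref{lem:comp} are designed to handle.
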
   
\begin{remark} It is worth pointing out that Theorem \ref{thm:basic} implies that the forward orbit of any wandering point contains only finitely many polynomials (corresponding to the number of $n$ such that $\deg(b_n)=0$). This finiteness statement also follows from recent work in \cite{char-p}, though the result above is much stronger. On the other hand, the assumptions in \cite{char-p} are weaker and the results there apply more generally to function fields $K/\mathbb{F}_p(t)$ and to more general notions of integral points.  \vspace{.1cm}   
\end{remark} 
\begin{remark}\label{rem:sketch} There are two reasons why we assume that the relevant points in our theorems are not post-critical, instead of the weaker condition of being not exceptional; this is useful to explain since it sheds some light on our proof strategy. First, in order to apply known equidistribution theorems for Galois orbits of points of small height (our main tool for proving that iterated preimages eventually become ``generic" enough to beat the Lioville bound) we need these iterated preimages to generate separable extensions; see Lemma \ref{lem:sep}.  
The second reason comes from the tension between the effects of ramification on local analysis and the delicate nature of diophantine approximation in characteristic $p$. Roughly speaking, to prove Theorem \ref{thm:basic} we want to show that $\lambda_v(\phi^n(a),\gamma)/h(\phi^n(a))<1$ for all $n\gg0$, where $\lambda_v(\cdot,\cdot)$ is a proximity fucntion related to the chordal metric on $\mathbb{P}^1(\mathbb{C}_v)$; see Section \ref{sec:int}. To do this, we fix a large $m$ and assume that $n>m$. Then, by a multivalued version of the inverse function theorem, 
\[\lambda_v(\phi^n(a),\gamma)\leq e_{m,\gamma}\cdot v(\phi^{n-m}(a)-\alpha_{m,n})+O(1)\vspace{.1cm}\] 
for some preimage $\alpha_{m,n}\in\phi^{-m}(\gamma)$ and some integer $e_{m,\gamma}\geq1$ depending on the ramification of $\phi^m$. On the other hand, our theorem on diophantine approximation exponents roughly says that $v(\phi^{n-m}(a)-\alpha_{m,n})\leq d^m/2\cdot h(\phi^{n-m}(a))+O(1)$. In particular, 
\[\frac{\lambda_v(\phi^n(a),\gamma)}{h(\phi^n(a))}\leq \frac{e_{m,\gamma}}{2}+o(1)\]
follows from basic properties of heights. Therefore, we cannot hope to succeed in proving $\lambda_v(\phi^n(a),\gamma)/h(\phi^n(a))<1$ with this method unless $e_{m,\gamma}=1$ for all $m$, a condition equivalent to $\gamma$ not being post-critical. In short: even when we can improve the Liouville bound, we can only improve it by a factor of $1/2$, and this improvement is swallowed up by any ramification.                    
\end{remark} 
An outline of our paper is as follows. In Section \ref{sec:equidistribution}, we prove that the algebraic functions in $\phi^{-m}(\gamma)$ either have small algebraic degree ($\leq d^m/2$) or have four Galois conjugates with non-constant cross ratio; see Theorem \ref{thm:crossratios}. The proof uses the equidistribution of points of small height in Berkovich space and an analysis of the Julia set of $\phi$ at a place of bad reduction; see also \cite{Baker-RumleyBook,Juan+Favre}. In Section \ref{sec:diophantine}, we give a brief  overview of the theory of diophantine approximation in characteristic $p$, noting in particular the importance of non-constant cross ratios to improvements of the Liouville bound; see also \cite{DeMathan,Osgood,Voloch}. This work culminates in a proof of Theorem \ref{thm:eventualExp}. Finally in Section \ref{sec:int}, we fill in the details of the sketch in Remark \ref{rem:sketch} above, combining our diophantine approximation result with local analysis to prove a version of Silverman's Limit Theorem in characteristic $p$.            

\medskip  

\noindent {\bf Acknowledgments.}  We are happy to thank Rob Benedetto, Laura DeMarco, Juan Rivera-Letelier, Tom Tucker, and Felipe Voloch for helpful conversations related to the work in this paper. We also thank MSRI for their support during the spring semester of 2022, when this project began.

\medskip

\section{Equidistribution and Cross-Ratios}\label{sec:equidistribution}
The main tool we use from dynamics to improve the Liouville bound for iterated preimages is the following strengthened version of \cite[Proposition 3.2]{char-p} for Galois orbits. 
\smallskip 
\begin{theorem}\label{thm:crossratios} Let $K/k(t)$ be a function field, let $\phi\in K(z)$ be non-isotrivial of degree $d\geq2$, and assume that $\gamma\in \mathbb{P}^1(K)$ is not a post-critical point for $\phi$. Then for all $m\gg 0$ and all $\alpha\in\phi^{-m}(\gamma)$, one of the following statements must hold: \vspace{.2cm}   
\begin{enumerate} 
\item[\textup{(1)}] The algebraic degree $d(\alpha)=[K(\alpha):K]$ of $\alpha$ is bounded by $d(\alpha)\leq d^m/2$. \vspace{.2cm}    
\item[\textup{(2)}] There exist distinct $\Gal(K^{\textup{sep}}/K)$-conjugates $\alpha_1,\alpha_2,\alpha_3,\alpha_4$ of $\alpha$ such that the cross-ratio $(\alpha_1,\alpha_2;\alpha_3,\alpha_4)$ is not in $\overline{k}$. \vspace{.1cm} 
\end{enumerate}      
\end{theorem}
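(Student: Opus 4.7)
The plan is to argue by contradiction, combining equidistribution of Galois orbits of small points in Berkovich projective space with the rigidity of canonical measures of non-isotrivial maps.

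First I would set up the contrapositive: assume there is an infinite sequence of integers $m$ and points $\alpha_m \in \phi^{-m}(\gamma)$ with $d(\alpha_m) > d^m/2$ and such that every $4$-tuple of $\Gal(K^{\textup{sep}}/K)$-conjugates of $\alpha_m$ has cross-ratio in $\overline{k}$. By the separability lemma (Lemma~\ref{lem:sep}) for non-post-critical $\gamma$, $d(\alpha_m)$ equals the size of the Galois orbit of $\alpha_m$, so in particular $d(\alpha_m) > d^m/2 \to \infty$. Moreover the canonical height satisfies $\hat{h}_\phi(\alpha_m) = d^{-m}\hat{h}_\phi(\gamma) \to 0$, placing the $\alpha_m$'s in exactly the regime where equidistribution of small points applies. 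I would then invoke the function-field equidistribution theorem (Baker--Rumely, Chambert-Loir, Favre--Rivera-Letelier): at every place $v$ of $K$ the Galois-averaged measures $\mu_{\alpha_m,v} := \tfrac{1}{d(\alpha_m)}\sum_{\sigma}\delta_{\sigma\alpha_m}$ converge weakly to the canonical measure $\mu_{\phi,v}$ on the Berkovich projective line $\mathbb{P}^1_{\textup{Berk},K_v}$.

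The crucial third step is to use non-isotriviality of $\phi$ to find a place $v_0$ and four disjoint Berkovich open sets $U_1,U_2,U_3,U_4\subset \mathbb{P}^1_{\textup{Berk},K_{v_0}}$, each of positive $\mu_{\phi,v_0}$-mass, with the property that for every choice of classical points $x_i \in U_i$ the cross-ratio $(x_1,x_2;x_3,x_4)$ lies outside $\overline{k}$. Such data should exist because otherwise, at every place, $\textup{supp}(\mu_{\phi,v})$ would be contained in a $\PGL_2$-image of $\mathbb{P}^1(\overline{k})$ (fix three points and use cross-ratio with respect to them), which via a measure-theoretic characterization of isotriviality forces $\phi$ to be isotrivial. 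The four open sets are then extracted using continuity of the cross-ratio on $4$-tuples of distinct classical points together with discreteness of $\overline{k}$ inside $K_{v_0}$.

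Finally, weak convergence $\mu_{\alpha_m,v_0} \to \mu_{\phi,v_0}$ evaluated on each $U_i$ yields $\mu_{\alpha_m,v_0}(U_i) > 0$ for all $m \gg 0$, so each $U_i$ contains some Galois conjugate of $\alpha_m$. The resulting four conjugates have cross-ratio outside $\overline{k}$, contradicting the standing assumption. The main obstacle will be the third step: establishing the measure-theoretic characterization of isotriviality --- that $\phi$ is isotrivial precisely when all canonical measures are supported in $\PGL_2$-orbits of $\mathbb{P}^1(\overline{k})$ --- and then carefully passing from this characterization to the four open sets while handling possibly non-classical points of $\textup{supp}(\mu_{\phi,v_0})$, where cross-ratios must be interpreted through classical approximations.
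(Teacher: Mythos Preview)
Your overall architecture---contradiction via equidistribution of Galois orbits of small points, leveraging non-isotriviality at some place---matches the paper exactly. The setup (separability via Lemma~\ref{lem:sep}, $\hat h_\phi(\alpha_m)\to 0$, size of orbit $\to\infty$) is fine, and the final step is fine. The genuine gap is in your ``crucial third step.''

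The proposed characterization ``$\phi$ is isotrivial iff every $\mu_{\phi,v}$ is supported in a $\PGL_2$-image of $\mathbb{P}^1(\overline{k})$'' is not well-posed: $\mathbb{P}^1(\overline{k})$ consists of Type~I points, whereas $\operatorname{supp}(\mu_{\phi,v})=\mathcal{J}_{\phi,v}$ is typically concentrated on Type~II/III/IV points (already at a good-reduction place it is a single Type~II point). So the inclusion you want fails even for isotrivial maps, and your plan to ``fix three points and use cross-ratio'' breaks down because the Julia set need not contain \emph{any} classical points. Relatedly, your contrapositive only yields ``for every four open sets of positive mass, \emph{some} choice of classical points has cross-ratio in $\overline{k}$''; the continuity/discreteness argument upgrades this to ``all cross-ratios of classical \emph{support} points lie in $\overline{k}$,'' but that statement is vacuous if there are no classical support points. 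So the route through classical cross-ratios of Julia points does not close.

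The paper avoids this entirely. It invokes Baker's theorem (Theorem~\ref{thm:Baker}): non-isotriviality gives a place $v_0$ of \emph{genuine bad reduction}. There the Julia set is uncountable and perfect, so one can pick four Julia points $\zeta_1,\dots,\zeta_4$ of the same type (not necessarily classical). The key device is the Hsia kernel $\sup\{\cdot,\cdot\}$, which extends $|\cdot - \cdot|_{v_0}$ to all of $\mathbb{A}^{1,\mathrm{an}}$ and lets one define a Berkovich cross-ratio. An explicit case analysis (Proposition~\ref{lem:badred2}) produces small Berkovich disks near the $\zeta_i$ such that any four points $z_i$ drawn from them---in particular the classical Galois conjugates landing there by equidistribution---satisfy $|(z_1,z_2;z_3,z_4)|_{v_0}>1$. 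Since every element of $\overline{k}$ has absolute value $1$, this immediately forces the cross-ratio out of $\overline{k}$, with no need for any isotriviality characterization in terms of supports. In short: replace your measure-theoretic criterion by Baker's bad-reduction criterion, and replace ``cross-ratio $\notin\overline{k}$'' by the stronger and far easier ``$|\text{cross-ratio}|_{v_0}\neq 1$.''
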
 
\smallskip 
To prove this result, we use tools from non-archimedean dynamics, which we now discuss. In what follows, let $(\mathbb{C}_v,|\cdot|_v)$ be an algebraically closed and complete ultrametric field, and let $\mathbb{A}^{1,\text{an}}_{\mathbb{C}_v}$ and $\mathbb{P}^{1,\text{an}}_{\mathbb{C}_v}$ be the Berkovich affine and projective lines, respectively. We recall some basic facts about these spaces; see \cite[\S6]{Rob'sBook} for more details and additional information.  

Let $\zeta=\Vert \cdot \Vert_\zeta\in\mathbb{A}^{1,\text{an}}_{\mathbb{C}_v}$ be a multiplicative seminorm on the polynomial ring $\mathbb{C}_v[T]$, and define the \emph{diameter} of $\zeta$ to be \vspace{.1cm} 
\[\text{diam}(\zeta):=\inf\{\Vert T-a\Vert_\zeta\, :\, a\in\mathbb{C}_v\}. \vspace{.1cm}\] 
In particular, if we let $\infty$ be the unique point in $\mathbb{P}^{1,\text{an}}_{\mathbb{C}_v}$ not in $\mathbb{A}^{1,\text{an}}_{\mathbb{C}_v}$ and define $\text{diam}(\infty)=+\infty$, then the function $\text{diam}:\mathbb{P}^{1,\text{an}}_{\mathbb{C}_v}\rightarrow \mathbb{R}\cup\{+\infty\}$ is upper semicontinuous and vanishes precisely on the Type I points of $\mathbb{A}^{1,\text{an}}_{\mathbb{C}_v}$. 

\begin{remark}\label{rem:diam} In fact, it is straightforward to check that if $\zeta=\zeta(a,r)$ is a Type II or III point corresponding to the closed disk $\bar{D}(a,r)$ in $\mathbb{C}_v$ for $r>0$, then $\text{diam}(\zeta)=r$. Likewise, if $\zeta$ is a Type IV point defined by a decreasing sequence of discs $\bar{D}(a_1,r_1)\supset \bar{D}(a_2,r_2)\supset\dots$ with empty intersection, then $\text{diam}(\zeta)=\lim_{n\rightarrow\infty} r_n>0$ by \cite[Lemma 2.7(b)]{Rob'sBook}.  
\end{remark} 

Now, since $\mathbb{P}^{1,\text{an}}_{\mathbb{C}_v}$ is uniquely path connected (see \cite[Theorem 6.32]{Rob'sBook}), we let $[\zeta_1,\zeta_2]\subset\mathbb{P}^{1,\text{an}}_{\mathbb{C}_v}$ be the unique path between $\zeta_1,\zeta_2\in \mathbb{P}^{1,\text{an}}_{\mathbb{C}_v}$. Then it follows from basic properties of Berkovich space that $\text{diam}: [\zeta,\infty]\rightarrow[\text{diam}(\zeta),+\infty]$ is a homeomorphism. Likewise, for $\zeta_1, \zeta_2\in \mathbb{P}^{1,\text{an}}_{\mathbb{C}_v}$, define $\zeta_1\vee\zeta_2$ to be the unique point in $\mathbb{P}^{1,\text{an}}_{\mathbb{C}_v}$ satisfying \vspace{.1cm}
\[[\zeta_1,\infty]\cap[\zeta_2,\infty]\cap[\zeta_1,\zeta_2]=\zeta_1\vee\zeta_2. \vspace{.1cm} \]
See \cite[Proposition 6.35]{Rob'sBook} for a justification. Moreover, it is clear with this definition that $\zeta_1\vee\zeta_1=\zeta_1$ and $\zeta_1\vee\zeta_2=\zeta_2\vee\zeta_1$ for all $\zeta_1,\zeta_2\in\mathbb{P}^{1,\text{an}}_{\mathbb{C}_v}$. Now (as in \cite[p.9]{Juan+Favre}) we define \vspace{.1cm}
\begin{equation}
\label{Hsia}
\sup\{\zeta_1,\zeta_2\}=\text{daim}(\zeta_1\vee\zeta_2), \vspace{.1cm} 
\end{equation} 
known as the Hsia Kernel in \cite[\S 4]{Baker-RumleyBook}. This pairing extends the metric on $\mathbb{C}_v$ and has many similar properties, several of which we collect below; see, for instance,  \cite[Proposition 4.1]{Baker-RumleyBook}. 

\begin{proposition}\label{prop:Hsia}{(Properties of the Hsia Kernel)} Let $\sup\{\cdot,\cdot\}$ be as in \eqref{Hsia}. Then: \vspace{.1cm} 
\begin{enumerate} 
\item[\textup{(a)}] $\sup\{\cdot,\cdot\}$ is nonnegative, symmetric, and continuous in each variable separately. \vspace{.2cm} 
\item[\textup{(b)}] $\sup\{\zeta_1,\zeta_2\}=|\zeta_1-\zeta_2|_v$ for all Type I points $\zeta_1,\zeta_2\in \mathbb{A}^{1,\text{an}}_{\mathbb{C}_v}$.\vspace{.2cm} 
\item[\textup{(c)}] For all $\zeta_1,\zeta_2,\zeta_3\in \mathbb{A}^{1,\text{an}}_{\mathbb{C}_v}$ we have that  
\[\sup\{\zeta_1,\zeta_3\}\leq \max\{\sup\{\zeta_1,\zeta_2\},\; \sup\{\zeta_2,\zeta_3\}  \}\]
with equality whenever $\sup\{\zeta_1,\zeta_2\}\neq \sup\{\zeta_2,\zeta_3\}$. \vspace{.2cm} 
\item[\textup{(d)}] For all fixed $\zeta\in \mathbb{A}^{1,\text{an}}_{\mathbb{C}_v}$ and $r>\textup{diam}(\zeta)$, the set 
\[\{\eta\in \mathbb{A}^{1,\text{an}}_{\mathbb{C}_v}\,:\, \sup\{\zeta,\eta\}<r\}\]
is open and connected in the Berkovich Topology. In fact, it is equal to an open Berkovich disk $D_{\textup{an}}(a,r)=\{\eta\in \mathbb{A}^{1,\text{an}}_{\mathbb{C}_v}\,:\, \Vert T-a\Vert_\eta<r\}$ for some $a\in\mathbb{C}_v$.          
\end{enumerate} 
\end{proposition}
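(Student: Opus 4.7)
The plan is to verify each of the four properties by unpacking the definition $\sup\{\zeta_1,\zeta_2\} = \text{diam}(\zeta_1 \vee \zeta_2)$ and exploiting the tree structure of $\mathbb{P}^{1,\text{an}}_{\mathbb{C}_v}$. Parts (a) and (b) are the easiest. Nonnegativity is immediate from the fact that the diameter is nonnegative, and symmetry follows from $\zeta_1 \vee \zeta_2 = \zeta_2 \vee \zeta_1$, which was already observed right before the proposition. For separate continuity I would fix $\zeta_1$ and show that $\zeta_2 \mapsto \zeta_1 \vee \zeta_2$ is continuous into the path $[\zeta_1,\infty]$ (using the characterization $[\zeta_1,\infty]\cap[\zeta_2,\infty]\cap[\zeta_1,\zeta_2]$), and then appeal to the fact that diameter restricts to a homeomorphism $[\zeta_1,\infty]\to[\text{diam}(\zeta_1),+\infty]$ mentioned just above the proposition. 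For (b), if $\zeta_1,\zeta_2$ correspond to points $a,b\in\mathbb{C}_v$, then both $a$ and $b$ lie in the closed disk $\bar{D}(a,|a-b|_v)$, which is the smallest such disk and corresponds to a Type II point whose diameter is $|a-b|_v$ by Remark \ref{rem:diam}; it is routine to check that this Type II point equals $a \vee b$.

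For (c), the strategy is the standard tripod argument in a real tree. Given three points $\zeta_1,\zeta_2,\zeta_3$ in the uniquely path connected space $\mathbb{P}^{1,\text{an}}_{\mathbb{C}_v}$, one shows that among the three joins $\zeta_i \vee \zeta_j$ at least two coincide, say $\zeta_1\vee\zeta_3 = \zeta_2\vee\zeta_3$, and the third join $\zeta_1\vee\zeta_2$ lies on the segment between this common point and $\infty$. Translating via diameter and using that $\text{diam}$ is monotone along paths leading to $\infty$ yields the ultrametric inequality, with the sharpening that equality must hold when the two smaller values are distinct. For (d), given $\zeta$ and $r>\text{diam}(\zeta)$, I would choose any Type I point $a\in\mathbb{C}_v$ lying in the connected component of $\mathbb{P}^{1,\text{an}}_{\mathbb{C}_v}\setminus\{\zeta(0,r')\}$ containing $\zeta$ for an appropriate $r'<r$ (using the classification of points in Berkovich space and the description of Type IV points from Remark \ref{rem:diam} when $\zeta$ is of that type). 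One then verifies, using (c) and the characterization of the open Berkovich disk $D_{\text{an}}(a,r)$ as $\{\eta : \|T-a\|_\eta < r\}$, that $\sup\{\zeta,\eta\}<r$ if and only if $\eta\in D_{\text{an}}(a,r)$; openness and connectedness of this disk in the Berkovich topology are standard.

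The main obstacle I anticipate is part (d), since it requires choosing the center $a$ carefully and handling the three possible Berkovich types of $\zeta$ uniformly. The Type II and Type III cases are transparent from Remark \ref{rem:diam}, but the Type IV case requires invoking the defining sequence of nested disks and verifying that any $a$ contained in all sufficiently small terms of that sequence works. Because all four statements appear verbatim as \cite[Proposition 4.1]{Baker-RumleyBook}, I would keep the exposition brief and defer the most technical point-classification arguments to that reference.
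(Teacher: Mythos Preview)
The paper does not give its own proof of this proposition at all: it simply records the statement and refers the reader to \cite[Proposition~4.1]{Baker-RumleyBook} (the sentence immediately preceding the proposition). Your proposal is therefore strictly more detailed than what the paper does, and since you ultimately defer the technical points to the same reference, your approach is entirely consistent with the paper's treatment.
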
  

\begin{remark}\label{rem:Fact} A particularly useful (and easy) consequence of part (c) above is the following fact. Let $\zeta_1,\zeta_2\in\mathbb{A}^{1,\text{an}}_{\mathbb{C}_v}$ be fixed. Then for all $\eta_1,\eta_2\in\mathbb{A}^{1,\text{an}}_{\mathbb{C}_v}$ satisfying \vspace{.05cm}   
\[\max\big\{\sup\{\zeta_1,\eta_1\}\,,\, \sup\{\zeta_2,\eta_2\}\big\}< \sup\{\zeta_1,\zeta_2\}, \vspace{.05cm}\]
we have that $\sup\{\zeta_1,\zeta_2\}=\sup\{\eta_1,\eta_2\}$.      
\end{remark} 
 
From here, we may define the \emph{cross-ratio} of distinct points $\zeta_1,\zeta_2,\zeta_3,\zeta_4\in\mathbb{A}^{1,\text{an}}_{\mathbb{C}_v}$ by 
\begin{equation}\label{def:cross}
(\zeta_1,\zeta_2;\zeta_3,\zeta_4)=\frac{\sup\{\zeta_1,\zeta_4\}\cdot \sup\{\zeta_2,\zeta_3\}}{ \sup\{\zeta_1,\zeta_3\}\cdot \sup\{\zeta_2,\zeta_4\} }.
\end{equation} 
Note that by Proposition \ref{prop:Hsia} part (b), this definition of the cross-ratio on $\mathbb{A}^{1,\text{an}}_{\mathbb{C}_v}$ generalizes the usual cross-ratio on $\mathbb{C}_v$ (with absolute values added). Moreover, \eqref{def:cross} is invariant under coordinate change on $\mathbb{P}^1(\mathbb{C}_v)$. Now, in order to analyze the cross-ratio under preimages of rational functions, we discuss some tools from dynamics on Berkovich space. Let $f\in \mathbb{C}_v(z)$ be a rational map of degree at least two and let $\mathcal{J}_{f,\text{an}}$ be the (Berkovich) Julia set of $f$ in $\mathbb{P}_{\mathbb{C}_v}^{1,\text{an}}$; see \cite[\S8]{Rob'sBook}. Moreover, we say that $f$ has \emph{genuine bad reduction} over $\mathbb{C}_v$ if there is no coordinate change on $\mathbb{P}^1(\mathbb{C}_v)$ over which $f$ has good reduction (meaning the corresponding reduction map on the residue field has the same degree as the original map $f$). Next, to pass from properties of the Julia set to points in $f^{-m}(\gamma)$ for $\gamma\in\mathbb{P}^1(\mathbb{C}_v)$, we need the following notion of equidistribution. In what follows, $\mu_f$ denotes the equilibrium measure on $\mathbb{P}^{1,\text{an}}_{\mathbb{C}_v}$ attached to $f$; see \cite{Juan+Favre} for a definition that works in any characteristic. In particular, $\mu_f$ is supported precisely on the Julia set $\mathcal{J}_{f,\text{an}}$.    
\begin{definition} Let $\{S_n\}_{n\geq1}$ be a sequence of finite sets in $\mathbb{P}_{\mathbb{C}_v}^{1,\text{an}}$ and let 
\[\nu_n=\frac{1}{\#S_n}\sum_{z\in S_n}[z],\]
where $[z]$ denotes the dirac measure attached to $z\in\mathbb{P}_{\mathbb{C}_v}^{1,\text{an}}$. Then we say that the sequence $\{S_n\}_{n\geq1}$ \emph{equidistributes} with respect to $\mu_f$ if the sequence of measures $\{\nu_n\}_{n\geq1}$ converges weakly to $\mu_f$; see \cite[p.7]{prob} for a formal definition.        
\end{definition} 
\begin{remark}\label{rem:concrete} More concretely for our purposes, if $D_{\textup{an}}(a,r)\subseteq \mathbb{P}_{\mathbb{C}_v}^{1,\text{an}}$ is an open Berkovich disk, if $f$ has genuine bad reduction over $\mathbb{C}_v$, and if $\{S_n\}_{n\geq1}$ equidistributes with respect to $\mu_f$, then 
\[\mu_f(D_{\textup{an}}(a,r))=\lim_{n\rightarrow\infty}=\nu_n(D_{\textup{an}}(a,r))=\lim_{n\rightarrow\infty} \frac{\#(S_n\cap D_{\textup{an}}(a,r))}{\#S_n}\]
by the Portmanteau Theorem in probability; see, for instance, \cite[Theorem 2.1]{prob}. Here we use also that the boundary of $D_{\textup{an}}(a,r)$ is the single point $\zeta(a,r)$ and that $\mu_f$ is a non-atomic measure \cite[Theorem E]{Juan+Favre}. In particular, if $z\in\mathcal{J}_{f,\text{an}}$ and $z\in D_{\textup{an}}(a,r)$, then $\mu_f(U)>0$. Hence, for all $N$ there must exist $M$ such that $\#(S_n\cap D_{\textup{an}}(a,r))\geq N$ holds for all $n\geq M$.           
\end{remark} 
With the necessary background now in place, we are ready to prove our first result in this section. Namely, we show that if $\{S_n\}_{n\geq1}$ equidistributes with respect to $\mu_f$ and $f$ has bad reduction, then we can find distinct points in $S_n$ with cross-ratio greater than one for all large $n$; compare to \cite[Proposition 3.2]{char-p}. 
\smallskip     

\begin{proposition}\label{lem:badred2} Suppose that $f$ has genuinely bad reduction over $\mathbb{C}_v$ and that $\{S_n\}_{n\geq1}$ is a sequence of finite subsets of $\mathbb{P}_{\mathbb{C}_v}^{1,\text{an}}$ that equidistribute with respect to $\mu_f$. Then for all $n\gg0$ there are distinct points $z_{1},z_{2},z_{3},z_{4}\in S_n$ such that $(z_{1},z_{2};z_{3},z_{4})>1$.     
\end{proposition}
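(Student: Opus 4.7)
The plan is to exploit the tree structure of Berkovich space together with equidistribution to force a cross-ratio bounded below by $(R/r)^2$, where $R > r > 0$ are a pair of distances in the Hsia kernel. First, since $f$ has genuine bad reduction over $\mathbb{C}_v$, a standard theorem in non-archimedean dynamics (due to Rivera-Letelier; see \cite[\S 8]{Rob'sBook}) implies that the Berkovich Julia set $\mathcal{J}_{f,\text{an}}$ contains at least two distinct points. Since $\mathcal{J}_{f,\text{an}}$ is precisely the support of $\mu_f$, I may select distinct $\zeta_1,\zeta_2 \in \textup{supp}(\mu_f)$.

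Next I would localize around $\zeta_1$ and $\zeta_2$. Set $R := \sup\{\zeta_1,\zeta_2\} = \textup{diam}(\zeta_1 \vee \zeta_2)$; the fact that $\zeta_1 \vee \zeta_2$ lies strictly above both $\zeta_i$ in the uniquely path-connected tree yields $R > \max\{\textup{diam}(\zeta_1),\textup{diam}(\zeta_2)\}$. Choose any $r$ with this maximum strictly less than $r < R$, and set $U_i := \{\eta \in \mathbb{A}^{1,\text{an}}_{\mathbb{C}_v} : \sup\{\zeta_i,\eta\} < r\}$ for $i=1,2$. By Proposition \ref{prop:Hsia}(d), each $U_i$ is an open Berkovich disk containing $\zeta_i$, and the ultrametric inequality forces $U_1 \cap U_2 = \emptyset$: a common point $\eta$ would imply $R = \sup\{\zeta_1,\zeta_2\} \leq \max\{\sup\{\zeta_1,\eta\},\sup\{\eta,\zeta_2\}\} < r$, a contradiction.

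Now I would apply equidistribution. Since $\zeta_i \in \textup{supp}(\mu_f) \cap U_i$ and $U_i$ is open, $\mu_f(U_i) > 0$ for $i=1,2$. Remark \ref{rem:concrete} then guarantees that for all $n \gg 0$, each $S_n \cap U_i$ contains at least two points; pick distinct $z_1,z_3 \in S_n \cap U_1$ and distinct $z_2,z_4 \in S_n \cap U_2$, and note that all four are distinct because $U_1$ and $U_2$ are disjoint. A direct computation then finishes the proof: ultrametricity gives $\sup\{z_1,z_3\} < r$ and $\sup\{z_2,z_4\} < r$, while Remark \ref{rem:Fact} applied to the pairs $(z_1,z_4)$ and $(z_3,z_2)$ (valid since each $\zeta_i$ is within distance $<r<R$ of the corresponding $z_j \in U_i$) yields $\sup\{z_1,z_4\} = \sup\{z_2,z_3\} = R$, so
\[
(z_1,z_2;z_3,z_4) = \frac{\sup\{z_1,z_4\}\cdot \sup\{z_2,z_3\}}{\sup\{z_1,z_3\}\cdot \sup\{z_2,z_4\}} > \frac{R^2}{r^2} > 1.
\]

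The main obstacle is really just the initial input from non-archimedean dynamics, namely that genuine bad reduction forces $|\mathcal{J}_{f,\text{an}}| \geq 2$; after that, the argument is essentially a bookkeeping exercise with the Hsia kernel. It is also worth emphasizing that allowing the $\zeta_i$ to be non-Type-I points is essential, because there is no a priori reason $\textup{supp}(\mu_f)$ should contain classical points, and the construction of the Berkovich disks $U_i$ only requires that $r$ exceed $\textup{diam}(\zeta_i)$ rather than forcing $\zeta_i$ itself to be a classical point.
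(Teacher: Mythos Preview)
Your argument is essentially the paper's Case~2, but there is a real gap: the assertion that $\zeta_1 \vee \zeta_2$ lies \emph{strictly} above both $\zeta_i$ (equivalently $R > \max\{\textup{diam}(\zeta_1),\textup{diam}(\zeta_2)\}$) is not automatic for two distinct Berkovich points. It fails precisely when $\zeta_1$ and $\zeta_2$ are comparable in the tree order. For instance, the concentric Type~II points $\zeta_1 = \zeta(0,1)$ and $\zeta_2 = \zeta(0,2)$ satisfy $\zeta_1 \vee \zeta_2 = \zeta_2$, hence $R = \textup{diam}(\zeta_2)$, and there is no $r$ with $\textup{diam}(\zeta_2) < r < R$; your disks $U_1,U_2$ then cannot be chosen as required. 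Knowing only that $\mathcal{J}_{f,\text{an}}$ has at least two points does not exclude this configuration, and your closing remark that allowing non-Type-I $\zeta_i$ is ``essential'' cuts the wrong way: two distinct Type~I (or Type~IV) points are automatically incomparable, so the gap arises exactly in the non-classical case you insist on admitting.

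The paper handles this by choosing \emph{four} Julia points of the same type and splitting into cases. If some pair has join outside the four (so that pair is incomparable), the paper runs exactly your construction on that pair. If every pairwise join lies among the four, then \cite[Theorem~6.32]{Rob'sBook} forces them to be concentric Type~II/III points $\zeta(a,r_1),\dots,\zeta(a,r_4)$ with $r_1<r_2<r_3<r_4$; one then picks $z_1,z_3,z_2,z_4\in S_n$ in the successive open annuli between these radii and checks directly via Proposition~\ref{prop:Hsia}(c) that $(z_1,z_2;z_3,z_4)>1$. To rescue your two-point approach you would have to produce two \emph{incomparable} points of $\mathcal{J}_{f,\text{an}}$, which needs more input than ``the Julia set is not a singleton.''
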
 
\begin{proof} First, since $f$ has genuine bad reduction, the set $\mathcal{J}_{f,\text{an}}$ is uncountable and contains no isolated points; see \cite[Theorem 8.15]{Rob'sBook}. In particular, we can choose four distinct points $\zeta_1,\zeta_2,\zeta_3,\zeta_4\in \mathcal{J}_{f,\text{an}}\mysetminus\{\infty\}$ of the same type pigeon-hole principle. From here, we proceed in cases. In particular, to simplify our arguments we often change coordinates. This is justified since $\mathcal{J}_{f,\text{an}}$ is coordinate independent; see \cite[Proposition 8.2]{Rob'sBook}) \vspace{.2cm} 

\textbf{Case(1):} Suppose that $\zeta_i\vee\zeta_j\in\{\zeta_1,\zeta_2,\zeta_3,\zeta_4\}$ for all $i,j$. Then \cite[Theorem 6.32]{Rob'sBook} implies that the $\zeta_i$ are concentric points of Type II or Type III; so we may write $\zeta_i=\zeta(a,r_i)$ for some distinct $r_i>0$. In particular, after a change of coordinates if necessary, we may assume that $a=0$ and that $0<r_1<r_2<r_3<r_4$. Now, Proposition \ref{prop:Hsia} part (d) and Remark \ref{rem:concrete} together imply that for all $n\gg0$ we can choose distinct points $z_1,z_2,z_3,z_4\in S_n$ such that
\begin{equation} 
\label{concentric1} 
0<r_1<\sup\{\zeta_1,z_1\}<r_2<\sup\{\zeta_2,z_3\}<r_3<\sup\{\zeta_3,z_2\}<r_4<\sup\{\zeta_4,z_4\}. 
\end{equation} 
On the other hand, it is straightforward to check that $\sup\{\zeta_i,\zeta_j\}=\max\{r_i,r_j\}$. Hence, repeated application of Proposition \ref{prop:Hsia} part (c) yields \vspace{.1cm}   
\begin{equation}\label{concentric2}
\begin{split} 
\sup\{z_1,z_4\}&=\sup\{\zeta_4,z_4\},\;\; \sup\{z_2,z_3\}=\sup\{\zeta_3,z_2\},\\[5pt] 
\sup\{z_1,z_3\}&=\sup\{\zeta_2,z_3\},\;\; \sup\{z_2,z_4\}=\sup\{\zeta_4,z_4\}. 
\end{split} 
\end{equation}
In particular, combining \eqref{concentric1} and \eqref{concentric2} above we see that \vspace{.1cm}  
\[(z_1,z_2 ; z_3, z_4)=\frac{\sup\{\zeta_4,z_4\}\cdot \sup\{\zeta_3,z_2\}}{\sup\{\zeta_2,z_3\}\cdot \sup\{\zeta_4,z_4\}}>\frac{\sup\{\zeta_3,z_2\}}{r_3}>1,\]
and we have constructed the desired points in $S_n$.   
 \vspace{.2cm} 

\textbf{Case(2):} Suppose that $\zeta_i\vee\zeta_j\not\in\{\zeta_1,\zeta_2,\zeta_3,\zeta_4\}$ for some $i,j$. Without loss, we assume that $\zeta_1\vee \zeta_2\in(\zeta_1,\zeta_2)$. Then \cite[Proposition 6.35]{Rob'sBook} implies that $\zeta_1\vee \zeta_2$ is a point of Type II. Hence, we may change coordinates if necessary to assume that $\zeta_1\vee\zeta_2=\zeta(0,1)$. 
Therefore,  
\[\max\{\text{diam}(\zeta_1),\text{diam}(\zeta_2)\}<\sup\{\zeta_1,\zeta_2\}=1\]
by Remark \ref{rem:diam}. On the other hand, Proposition \ref{prop:Hsia} part (d) and Remark \ref{rem:concrete} then together imply that we can choose distinct points $z_1,z_2,z_3,z_4\in S_n$ such that 
\begin{equation}
\label{important} 
\max\big\{\sup\{\zeta_1,z_1\},\sup\{\zeta_1,z_3\},\sup\{\zeta_2,z_2\},\sup\{\zeta_2,z_4\}\big\}<\sup\{\zeta_1,\zeta_2\}
\end{equation}
for all $n\gg0$. However, repeated application of Remark \ref{rem:Fact} implies that
\[\sup\{z_1,z_4\}=\sup\{z_2,z_3\}=\sup\{\zeta_1,\zeta_2\}=1.\]
In particular, we see that the cross-ratio 
\begin{equation*}
\begin{split} 
(z_1,z_2:z_3,z_4)&=\sup\{z_1,z_3\}^{-1}\cdot \sup\{z_2,z_4\}^{-1}\\[5pt]
&\geq\max\{\sup\{z_1,\zeta_1\},\sup\{z_3,\zeta_1\}\}^{-1}\cdot \max\{\sup\{z_2,\zeta_2\},\sup\{z_4,\zeta_2\}\}^{-1}\\[5pt] 
&> \sup\{\zeta_1,\zeta_2\}^{-1}\cdot \sup\{\zeta_1,\zeta_2\}^{-1}=1
\end{split} 
\end{equation*}
by combining Proposition \ref{rem:Fact} part (c) and \eqref{important}. Hence, $(z_{1},z_{2};z_{3},z_{4})>1$ as desired.              
\end{proof} 

\smallskip 
Next, we pass from global information to local information using the following landmark result, which says that the images of global points of small height equidistribute in Berkovich space at every completion. The following version is stated for product formula fields (e.g., number fields and function fields of curves) and may be found in \cite[Theorem 10.24]{Baker-RumleyBook}; see also \cite{Baker-Rumley,Juan+Favre,Thuillier} for similar versions. In what follows, given a product formula field $K$ and a place $v\in\mathcal{M}_K$, we let $K_v$ denote a completion of $K$ at $v$ and let $\mathbb{C}_v$ denote a completion of $\overline{K_v}$. In particular, $\mathbb{C}_v$ is a an algebraically closed and complete ultrametric field, and we may identify $K$ with its image in $\mathbb{C}_v$. Moreover, given a rational function $\phi\in K(z)$, we let $\hat{h}_\phi: \mathbb{P}^1(K)\rightarrow\mathbb{R}$ denote the usual Call-Silverman canonical height.         
\smallskip  
\begin{theorem}\label{thm:equidistribution}{(Adelic Equidistribution of Small Points)} Let $K$ be a product formula field, let $\phi\in K(z)$ have degree at least two, and let $S_n\subseteq K^{\textup{sep}}$ be a finite $\Gal(K^{\textup{sep}}/K)$-stable set for each $n\geq1$. Moreover, assume that $|S_n|\rightarrow\infty$ as $n\rightarrow\infty$ and that
\[\lim_{n\rightarrow\infty}\frac{1}{\# S_n}\sum_{\alpha\in S_n}\hat{h}_\phi(\alpha)=0.\]
Then $\{S_n\}$ equidistributes with respect to $\mu_{\phi,v}$ for every place $v\in \mathcal{M}_K$, where $\mu_{\phi,v}$ is the equilibrium measure associated to the induced map of $\phi$ on $\mathbb{P}_{\mathbb{C}_v}^{1,\text{an}}$.     
\end{theorem}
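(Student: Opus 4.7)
\emph{Proof proposal.} My plan is to follow the adelic potential-theoretic approach pioneered by Baker--Rumley and Favre--Rivera-Letelier, reducing equidistribution to an energy minimization problem on each Berkovich projective line. First, I would invoke the Call--Silverman decomposition of the canonical height as a sum of local contributions $\hat{h}_\phi(\alpha) = \sum_{v \in \mathcal{M}_K} n_v\, \hat{\lambda}_{\phi,v}(\alpha)$. Each local height extends canonically to $\mathbb{P}^{1,\text{an}}_{\mathbb{C}_v}$ via the Arakelov--Green function $g_{\phi,v}(z,w)$ attached to $\mu_{\phi,v}$, which is symmetric, has a logarithmic singularity on the diagonal, and satisfies $\int g_{\phi,v}(z,w)\, d\mu_{\phi,v}(w) = 0$ after suitable normalization. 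Then $\mu_{\phi,v}$ is characterized as the unique probability measure minimizing the associated energy functional (Frostman's theorem on Berkovich space).

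Second, I would attach to each Galois-stable finite set $S_n$ the empirical measure $\nu_n = \frac{1}{\# S_n}\sum_{\alpha \in S_n}[\alpha]$. The central identity is a Mahler-type formula expressing the average canonical height over $S_n$ as an adelic sum of regularized pairwise energies at every place, schematically
\[\frac{1}{\# S_n}\sum_{\alpha \in S_n}\hat{h}_\phi(\alpha) = \sum_{v \in \mathcal{M}_K} n_v \cdot I_v(\nu_n) + o(1),\]
where $I_v(\nu_n)$ denotes the energy of $\nu_n$ relative to $g_{\phi,v}$ with the diagonal removed. Since each $I_v(\nu_n)$ is bounded below by the equilibrium energy (which can be normalized to $0$), and since the left-hand side tends to zero by hypothesis, each individual local energy must itself tend to zero.

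Third, I would deduce weak convergence of $\nu_n$ to $\mu_{\phi,v}$ at every place from lower semi-continuity of the energy functional together with the uniqueness of its minimizer. Specifically, any weak-$*$ subsequential limit $\nu_\infty$, which exists on the compact space $\mathbb{P}^{1,\text{an}}_{\mathbb{C}_v}$ by Prokhorov's theorem, satisfies $I_v(\nu_\infty) \leq \liminf I_v(\nu_n) = 0 = I_v(\mu_{\phi,v})$. This forces $\nu_\infty = \mu_{\phi,v}$; since every subsequential limit is pinned down, the full sequence $\{\nu_n\}$ converges weakly as required.

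I expect the main obstacle to lie in the potential-theoretic foundations rather than the adelic reduction itself. The energy of a discrete measure is naively $+\infty$ because of the diagonal singularity of $g_{\phi,v}$, so one must regularize, typically by separating off a ``discriminant'' term built from the pairwise differences of conjugates, and then show via product-formula bookkeeping that this diagonal contribution grows strictly slower than $(\# S_n)^2$ under the small-height hypothesis. Building the full theory of Laplacians and subharmonic functions on Berkovich curves, and verifying the Frostman-type characterization of $\mu_{\phi,v}$ as the unique energy minimizer in that setting, is substantial technical work carried out in \cite{Baker-RumleyBook} and \cite{Juan+Favre}; once that groundwork is in place, the argument above is essentially a clean application of the product formula and weak compactness.
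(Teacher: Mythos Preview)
Your sketch is a faithful outline of the Baker--Rumely/Favre--Rivera-Letelier potential-theoretic argument, and it is essentially correct. However, you should note that the paper does \emph{not} give its own proof of this statement: it is quoted as a black box from \cite[Theorem~10.24]{Baker-RumleyBook}, with additional pointers to \cite{Baker-Rumley,Juan+Favre,Thuillier}. So there is nothing in the paper to compare your proposal against; what you have written is precisely the strategy carried out in those references, and your identification of the two genuine technical issues---the diagonal regularization of the discrete energy and the Berkovich-space Frostman theory---is accurate.
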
 
In particular, since we wish to apply Theorem \ref{thm:equidistribution} to certain subsets $S_m\subseteq\phi^{-m}(\gamma)$, we must be sure that the iterated preimages of $\gamma$ are separable over $K$. This is guaranteed in our case by the following elementary result.   
\begin{lemma}\label{lem:sep} Let $K$ be a field, let $\phi\in K(z)$ be non-constant map, and assume that $\gamma\in\mathbb{P}^1(K)$ is not a post-critical point for $\phi$. Then $\alpha\in\mathbb{P}^1(K^{\textup{sep}})$ for all $\alpha\in\phi^{-m}(\gamma)$ and all $m\geq1$.   
\end{lemma}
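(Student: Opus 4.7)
The plan is to induct on $m$. The base case $m=1$ carries all the content; the inductive step reduces an arbitrary preimage $\alpha \in \phi^{-(m+1)}(\gamma)$ to a one-step preimage $\alpha \in \phi^{-1}(\beta)$ for a separable intermediate point $\beta := \phi(\alpha) \in \phi^{-m}(\gamma)$.

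For the base case, I would write $\phi = f/g$ for coprime polynomials $f,g \in K[z]$ and first treat the case $\gamma \in \mathbb{A}^1(K)$. The finite preimages of $\gamma$ are exactly the roots of $P_\gamma(z) := f(z) - \gamma g(z) \in K[z]$, and for any such root $\alpha$ (necessarily with $g(\alpha) \neq 0$) the multiplicity of $\alpha$ as a root of $P_\gamma$ equals the ramification index $e_\phi(\alpha)$. The hypothesis that $\gamma$ is not post-critical says, in particular, that $\phi^{-1}(\gamma)$ contains no critical point of $\phi$, so every root of $P_\gamma$ is simple. The minimal polynomial of $\alpha$ over $K$ divides $P_\gamma$, and a divisor of a polynomial with only simple roots has only simple roots, so $\alpha$ is separable over $K$. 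The case $\gamma = \infty$ is symmetric using the zeros of $g(z)$, and the case $\alpha = \infty \in \mathbb{P}^1(K) \subseteq \mathbb{P}^1(K^{\textup{sep}})$ is trivial.

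For the inductive step, given $\alpha \in \phi^{-(m+1)}(\gamma)$ I set $\beta := \phi(\alpha) \in \phi^{-m}(\gamma)$; the inductive hypothesis then gives $\beta \in \mathbb{P}^1(K^{\textup{sep}})$, so $K(\beta)/K$ is separable. I then apply the base case to the point $\beta$ and the map $\phi$ viewed over the field $K(\beta)$, yielding that $\alpha$ is separable over $K(\beta)$; by transitivity of separability in towers this gives $\alpha \in K^{\textup{sep}}$. The one point requiring verification is that $\beta$ remains not post-critical for $\phi$ when viewed over $K(\beta)$: critical points of $\phi$ are defined geometrically in $\mathbb{P}^1(\bar{K}) = \mathbb{P}^1(\overline{K(\beta)})$ and so are independent of the base field, and since $\phi^{-i}(\beta) \subseteq \phi^{-(i+m)}(\gamma)$ for each $i \geq 1$, any critical point in the backward orbit of $\beta$ would also lie in that of $\gamma$, contradicting the hypothesis on $\gamma$.

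The main obstacle is conceptual rather than computational. One must interpret ``not post-critical'' in the sense that matches its use later in the paper --- namely that no iterated preimage $\phi^{-i}(\gamma)$ for $i \geq 1$ contains a critical point of $\phi$ --- rather than restricting attention to the forward orbit of $\gamma$. Once this reading is in place, the argument reduces to the standard identification of the multiplicity of $\alpha$ in $P_\gamma$ with the ramification index $e_\phi(\alpha)$, together with transitivity of separability in towers.
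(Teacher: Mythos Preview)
Your proof is correct, and your reading of ``not post-critical'' as ``no iterated preimage of $\gamma$ is a critical point'' is exactly the interpretation the paper uses in its own proof and in Remark~\ref{rem:ramification} (the wording in the Introduction is slightly misleading).

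The organization, however, differs from the paper's. You induct on $m$, prove separability of one-step preimages as the base case, and then climb the tower using transitivity of separability over the intermediate field $K(\beta)$. The paper instead works directly with the iterate $\phi^m$: assuming $\alpha\in\phi^{-m}(\gamma)$ is inseparable, it finds a repeated root $c$ of the minimal polynomial, shows $c$ is a root of $F_m:=f_m-\gamma g_m$ of multiplicity $\geq 2$, and concludes $e_{\phi^m}(c)\geq 2$; the multiplicativity of ramification indices then locates a critical point $\phi^i(c)$ in a preimage of $\gamma$. Your approach trades the appeal to multiplicativity of ramification for the base-change to $K(\beta)$ and the tower argument; the paper's approach handles all $m$ in one shot with no change of ground field. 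Both routes rest on the same core fact---multiplicity of $\alpha$ in $f-\gamma g$ equals $e_\phi(\alpha)$---so the difference is mainly structural rather than conceptual.
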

\begin{proof} 
After a change of variables, we may assume that $\gamma$ is not the point at infinity. Assume for a contradiction that $\alpha\in\phi^{-m}(\gamma)$ for some $m\geq1$ and that $\alpha\not\in\mathbb{P}^1(K^{\textup{sep}})$. Then the minimum polynomial $m(z)\in K[z]$ of $\alpha$ has a repeated root; say $m(z)=(z-c)^r\cdot h(z)$ for some $r\geq2$, some $c\in \overline{K}$, and some $h\in \overline{K}[z]$. Now write $\phi^m=f_m(z)/g_m(z)$ for some coprime $f_m,g_m\in K[z]$. Moreover, note that we can assume that $g(\alpha)\neq0$. Then, since $\phi^m(\alpha)=\gamma$, we have that $\alpha$ is a root of the polynomial $F_m:=f_m(z)-\gamma\cdot g_m(z)\in K[z]$. Hence, $m(z)$ must divide $F_m$, and so $F_m(z)=(x-c)^e\cdot H(z)$ for some $e\geq r\geq2$ and some $H\in \overline{K}[z]$ with $H(c)\neq0$. In particular, we note that $F_m(c)=0$ and that $g_m(c)\neq0$; otherwise, $f_m$ and $g_m$ have a common factor $(x-c)$ in $\overline{K}[z]$ and so have a common factor in $K[z]$, a contradiction. In particular, we deduce that $\phi^m(c)=\gamma$ and that 
\[\phi^m(z)-\gamma=(z-c)^e\cdot \frac{H(z)}{g_m(z)}=(z-c)^e\cdot R(z)\]
for some $R(z)\in \overline{K}(z)$ with $R(c)\neq0$. Therefore, $e_{\phi^m}(c)=e\geq2$ and 
\[e_\phi(c)\cdot e_\phi(\phi(c))\dots e_{\phi}(\phi^{m-1}(c))=e_{\phi^m}(c)=e\geq2\]
by the multiplicativity of ramification indices; see \cite[Proposition 2.6(c)]{SilvElliptic}. In particular, $e_{\phi}(\phi^{i}(c))\geq2$ for some superscript $0\leq i<m$, and thus $\phi^{i}(c)$ is a critical point of $\phi$; see Remark \ref{rem:ramification}. On the other hand, $\phi^{m-i}(\phi^i(c))=\phi^m(c)=\gamma$, which implies that $\gamma$ is a post-critical point for $\phi$ (since $m-i\geq1$), a contradiction. Therefore, the minimum polynomial of $\alpha$ over $K$ has no repeated roots in $\overline{K}$; thus $\alpha\in\mathbb{P}^1(K^{\text{sep}})$ as claimed.   
\end{proof}  
\smallskip 
Finally, before proving Theorem \ref{thm:crossratios}, we need the following result guaranteeing a place of genuine bad reduction for all non-isotrivial maps.  
\smallskip 
\begin{theorem}\label{thm:Baker}{(\cite[Theorem 1.9]{Baker})} Let $K/k(t)$ be a function field and let $\phi\in K(z)$ have degree at least two. Then $\phi$ is non-isotrivial if and only if there is some place $v\in \mathcal{M}_K$ at which $\phi$ has genuine bad reduction. 
\end{theorem}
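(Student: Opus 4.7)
The plan is to prove both implications via Silverman's dynamical moduli space $M_d$ parametrizing degree-$d$ self-maps of $\mathbb{P}^1$ modulo $\PGL_2$-conjugation. For the easy direction, if $\phi$ is isotrivial then after conjugating by some $\sigma\in\PGL_2(\overline{K})$ the map $\sigma^{-1}\phi\sigma$ has coefficients in $\overline{k}$; constants reduce to themselves at every place of $\overline{K}$, so $\sigma^{-1}\phi\sigma$ has good reduction everywhere, and therefore $\phi$ has (potentially, equivalently genuine) good reduction at every $v\in\mathcal{M}_K$. Contrapositively, a place of genuine bad reduction rules out isotriviality.

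For the harder converse, assume $\phi$ has genuine good reduction at every place and let $C$ be the smooth projective curve with function field $K$. The class $[\phi]\in M_d(K)$ determines a rational map $\iota\colon C\dashrightarrow M_d$ whose value at the generic point is $[\phi]$. I would next invoke two inputs: (i) Silverman's theorem that $M_d$, as a GIT quotient of $\Rat_d\subset\mathbb{P}^{2d+1}$ by the $\PGL_2$-action, is an affine scheme of finite type for $d\geq 2$; and (ii) the equivalence between the genuine good reduction of $\phi$ at $v$ and the extension of $\iota$ across $v$, which follows from the valuative criterion applied to the geometric quotient $\Rat_d\to M_d$. Granted (i) and (ii), the hypothesis forces $\iota$ to extend to a morphism $\widetilde{\iota}\colon C\to M_d$. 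Since $C$ is proper and $M_d$ is affine, $\widetilde{\iota}$ must be constant, so $[\phi]\in M_d(\overline{k})$. Choosing a representative of this $\overline{k}$-point inside $\Rat_d(\overline{k})$ and the corresponding conjugating element $\sigma\in\PGL_2(\overline{K})$ then exhibits $\phi$ as isotrivial.

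The main obstacle is input (ii): translating the local, coordinate-sensitive condition of genuine good reduction into the intrinsic, algebro-geometric condition that the moduli map extends. The subtlety is that at a place $v$ the conjugating element realizing good reduction may live only in $\PGL_2(\overline{K}_v)$ rather than $\PGL_2(K_v)$, so one must argue on the level of the quotient, where these $\PGL_2$-orbits collapse to a single point. Once this GIT dictionary is established, the proper-to-affine rigidity principle (a morphism from a proper variety to an affine variety is constant) immediately finishes the argument, and the easy direction requires nothing beyond the observation that coefficients in $\overline{k}$ are fixed by every reduction map.
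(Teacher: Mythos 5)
This theorem is not proved in the paper; it is quoted verbatim from Baker \cite[Theorem 1.9]{Baker}, so there is no in-paper argument to compare against. Your sketch does, however, track Baker's actual proof quite faithfully: both directions hinge on Silverman's theorem that $M_d = \Rat_d /\!/\PGL_2$ is an affine variety, combined with the rigidity principle that a morphism from the proper curve $C$ (with function field $K$) to an affine scheme must be constant.

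One imprecision worth flagging in step (ii): you describe the extension of the moduli map $\iota\colon C\dashrightarrow M_d$ across a place of genuine good reduction as following from ``the valuative criterion applied to the geometric quotient.'' Since $M_d$ is affine and not proper, the valuative criterion of properness does not apply; the correct mechanism is more elementary. If $\phi$ becomes $\mathcal{O}_w$-integral with unit resultant after conjugating over some extension $L_w/K_v$, then every $\PGL_2$-invariant regular function $a$ on $M_d$ (a polynomial in the coefficients divided by a power of the resultant) satisfies $a([\phi]) = a([\sigma^{-1}\phi\sigma]) \in \mathcal{O}_w$. Since $a([\phi])\in K$ as well and $K\cap\mathcal{O}_w = \mathcal{O}_v$, the moduli map lands in $\mathcal{O}_v$ and hence extends. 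With this replacement, your outline is correct and agrees with the standard proof of Baker's theorem. (One should also record, as you implicitly use, that all points of $\Rat_d$ are $\PGL_2$-stable for $d\geq 2$, so the GIT quotient is geometric and ``equal image in $M_d$'' really does mean ``conjugate over the algebraically closed overfield.'')
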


\smallskip    
We now have all of the dynamical tools in place to prove our main result in this section. 
\begin{proof}[(Proof of Theorem \ref{thm:crossratios})] Let $K/k(t)$ be a function field, let $\phi\in K(z)$ be a non-isotrivial map, and assume that $\gamma\in\mathbb{P}^1(K)$ is not a post-critical point for $\phi$. Then $\alpha\in\mathbb{P}^1(K^{\text{sep}})$ for all $\alpha\in\phi^{-m}(\gamma)$ and all $m\geq1$ by Lemma \ref{lem:sep}.  
Now define the set  
\[S_m:=\{\alpha\in\phi^{-m}(\gamma)\,:\, d(\alpha)>d^m/2\}.\]
Note that if $S_m=\varnothing$ for all $m\gg0$, then there is nothing to prove. Therefore, we may assume that the set $I:=\{m\,:\,S_m\neq\varnothing\}$ is infinite. We will show that the hypothesis of Theorem \ref{thm:equidistribution} hold for the sequence $\{S_m\}_{m\in I}$. Clearly $S_m$ is $\Gal(K^{\text{sep}}/K)$-stable for all $m\in I$, as the algebraic degree of a separable element is invariant under the action of Galois. Likewise, by choosing some fixed $\alpha_0\in S_m$ for $m\in I$, we see that
\[|S_m|\geq\#\{\sigma(\alpha):\, \sigma\in\Gal(K^{\text{sep}}/K)\}=d(\alpha)>d^m/2\qquad \text{for $m\in I$,}\]
since Galois acts transitively on the roots of separable minimal polynomials. In particular, $|S_m|\rightarrow\infty$ as $m$ grows. Finally,   
\[\hat{h}_\phi(S_m):=\frac{1}{\# S_m}\sum_{\alpha\in S_m}\hat{h}_\phi(\alpha)=\frac{1}{\# S_m}\sum_{\alpha\in S_m}\frac{\hat{h}_\phi(\gamma)}{d^m}=\frac{\hat{h}_\phi(\gamma)}{d^m}\]
for all $m\in I$ by the functoriality of canonical heights. Thus, $\hat{h}_\phi(S_m)\rightarrow 0$ as $m\in I$ tends to infinity. Hence, Theorem \ref{thm:equidistribution} applies and the sequence $\{S_m\}_{m\in I}$ equidistributes with respect to the equilibrium measure $\mu_{\phi,v}$ attached to the induced map of $\phi$ on $\mathbb{P}_v^{1,\text{an}}$  for every place $v\in \mathcal{M}_K$. In particular, equidistribution holds for some place $v$ of genuine bad reduction for $\phi$ by Theorem \ref{thm:Baker} (since $\phi$ is non-isotrivial). However, in this case Proposition  \ref{lem:badred2} implies that for all sufficiently large $m\in I$ there exist $\alpha_1,\alpha_2,\alpha_3,\alpha_4\in S_m$ with non-constant cross-ratio (a cross-ratio in $\overline{k}$ necessarily has absolute value equal to one at all places). On the other hand, note that $\Gal(K^{\text{sep}}/K)$ acts transitively on $S_m$ for $m\in I$: otherwise $S_m$ is a union of at least $2$ disjoint orbits and thus $\# S_m>d^m/2+d^m/2=d^m$, contradicting the fact that $S_m\subseteq \phi^{-m}(\gamma)$. In particular, for all sufficiently large $m\in I$ and any $\alpha\in S_m$, there are $\Gal(K^{\text{sep}}/K)$-conjugates of $\alpha$ with non-constant cross ratio as claimed.                    
\end{proof} 

\section{Diophantine Approximation}\label{sec:diophantine}
We now give an overview of the theory of diophantine approximation in characteristic $p$ with particular emphasis on the importance of non-constant cross-ratios. First some notation. Throughout this section let $K=\overline{\mathbb{F}}_p(t)$ and let $v$ be a valuation on $K$; for simplicity, we work over an algebraically closed constant field, though this is not strictly speaking necessary. For such $K$, define the \emph{height} of $r\in K$ to be $h(r)=\max\{\deg(a),\deg(b)\}$, where $r=a/b$ for some coprime $a,b\in\overline{\mathbb{F}}_p[t]$. Analogous to the case of approximating algebraic irrationals, the goal in this setting is to understand how well a given algebraic function can be approximated by elements of $K$ with respect to $v$, relative to the height of the approximation. With this in mind, we define the \emph{diophantine approximations exponent} of $\alpha\in\overline{K}\mysetminus K$ with respect to the place $v$ to be 
\begin{equation}\label{def:diophantineExp} 
E_v(\alpha):=\max_{\substack{w\in \mathcal{M}_{K(\alpha)}\\ w|v}}\Bigg\{\limsup_{\substack{h(r)\rightarrow\infty\\ r\in K}}\frac{w(\alpha-r)}{h(r)}\Bigg\}. 
\end{equation} 
Here $\mathcal{M}_{K(\alpha)}$ is a complete set of inequivalent valuations on $K(\alpha)$ and $w\big|v$ indicates that $w$ restricts to $v$ on the subset $K\subseteq K(\alpha)$. 
Of particular interest for us are the exponents $E_v(\alpha)$ when $v$ is the valuation given by the order vanishing at infinity, $v(a/b)=\ord_\infty(a/b)=\deg(a)-\deg(b)$. In this case, the first important result in this setting is due to Mahler \cite{Mahler}:  
\smallskip

\begin{theorem}\label{thm:Liouville}{(The Liouville Bound)} Let $K=\overline{\mathbb{F}}_p(t)$ and let $v=\ord_\infty(\cdot)$. If $\alpha\in \overline{K}\mysetminus K$ has degree $d(\alpha)=[K(\alpha):K]\geq2$ and $w\in \mathcal{M}_{K(\alpha)}$ is any extension of $v$, then there is a constant $C=C(\alpha,w)$ such that 
\[w(\alpha-r)\leq d(\alpha)\cdot h(r)+C \;\;\;\;\; \text{for all $r\in K$}. \vspace{.1cm} \]
In particular, $E_v(\alpha)\leq d(\alpha)$ for all $\alpha\in \overline{K}\mysetminus K$.     
\end{theorem}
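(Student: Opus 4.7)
The plan is to mimic the classical Liouville argument over $\mathbb{Q}$, replacing the archimedean absolute value by the place $v = \ord_\infty$ on $K = \overline{\mathbb{F}}_p(t)$, and relying on two simple facts: the ultrametric inequality for $w$, and the trivial observation that a nonzero polynomial in $\overline{\mathbb{F}}_p[t]$ has nonnegative degree (so its $\ord_\infty$-valuation is nonpositive). To set up, let $d = d(\alpha)$, and after clearing denominators of the minimal polynomial of $\alpha$ choose
\[ M(z) = c_d z^d + c_{d-1} z^{d-1} + \cdots + c_0 \in \overline{\mathbb{F}}_p[t][z] \]
with $M(\alpha) = 0$, then factor $M(z) = c_d \prod_{i=1}^{d}(z - \alpha_i)$ over $\overline{K}$ with $\alpha_1 = \alpha$. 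Extend $w$ to a valuation on $\overline{K}$, still denoted $w$. For $r = a/b \in K$ in lowest terms, the polynomial $N(a,b) := \sum_i c_i a^i b^{d-i}$ lies in $\overline{\mathbb{F}}_p[t]$, is nonzero (since the minimal polynomial of $\alpha$ is irreducible over $K$ and $\alpha \notin K$, no conjugate $\alpha_i$ lies in $K$), and satisfies
\[ M(r) \;=\; c_d \prod_{i=1}^{d}(r - \alpha_i) \;=\; \frac{N(a,b)}{b^d}. \]

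Applying $w$ to this identity yields the relation
\[ w(r - \alpha) + \sum_{i=2}^{d} w(r - \alpha_i) + w(c_d) \;=\; w(N(a,b)) - d\, w(b), \]
and the proof reduces to estimating both sides. For the left side I plan to split into cases around the constant $C_0 := \max_{i \geq 2} w(\alpha - \alpha_i)$, which depends only on $\alpha$ and $w$: if $w(r - \alpha) \leq C_0$ then the bound is immediate, while otherwise $w(r - \alpha) > w(\alpha - \alpha_i)$ for every $i \geq 2$, and the ultrametric inequality forces $w(r - \alpha_i) = w(\alpha - \alpha_i)$ on the nose, so $\sum_{i \geq 2} w(r - \alpha_i)$ collapses to a single constant. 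For the right side, since $N(a,b)$ is a nonzero polynomial in $t$ we have $w(N(a,b)) \leq 0$, and since $b \in \overline{\mathbb{F}}_p[t]$ with $\deg b \leq h(r)$ we have $-d\, w(b) = d\cdot \deg b \leq d \cdot h(r)$.

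Assembling the two sides gives $w(r - \alpha) \leq d \cdot h(r) + C$ for an explicit $C = C(\alpha, w)$, which is the desired Liouville bound; dividing by $h(r)$, letting $h(r) \to \infty$, and maximizing over the finitely many extensions $w \mid v$ then delivers $E_v(\alpha) \leq d(\alpha)$. I do not anticipate any serious obstacle here: after the setup the whole argument is only a few lines, using just the ultrametric inequality and the polynomial degree estimate. The one item that needs minor bookkeeping is the normalization of $w$ relative to $v$ (i.e., the ramification index $e(w/v)$ and the extension from $K(\alpha)$ up to $\overline{K}$), which is absorbed into the additive constant $C$ under standard conventions for how the extension of $v$ acts on elements of $K$.
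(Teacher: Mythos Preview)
The paper does not actually prove this statement: Theorem~\ref{thm:Liouville} is quoted as a classical result of Mahler with only a citation and no argument, so there is no ``paper's own proof'' to compare against. Your proposal is the standard Liouville argument transplanted to $K=\overline{\mathbb{F}}_p(t)$, and it is correct: evaluating the integral minimal polynomial at $r=a/b$, factoring over $\overline{K}$, and applying the ultrametric equality $w(r-\alpha_i)=w(\alpha-\alpha_i)$ once $w(r-\alpha)$ exceeds the fixed constant $C_0$ is exactly how Mahler's bound is obtained.

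One point deserves a little more care than ``minor bookkeeping'': the normalization of $w$. If $w\in\mathcal{M}_{K(\alpha)}$ is normalized so that $w(K(\alpha)^\times)=\mathbb{Z}$, then $w|_K=e(w/v)\cdot v$, and your right-hand estimate becomes $-d\,w(b)=d\,e(w/v)\deg b\le d\,e(w/v)\,h(r)$; the ramification index is a \emph{multiplicative} factor in front of $h(r)$, not something that can be absorbed into the additive constant $C$. The statement as written (and the definition of $E_v$ in the paper) is consistent only under the convention that $w$ literally restricts to $v$ on $K$, i.e.\ $w|_K=v$ with $w$ possibly taking values in $\tfrac{1}{e}\mathbb{Z}$; this is also the convention the paper adopts later when it sets $\bar v(\alpha)=-\log|\alpha|_v$ on $\overline{K}$. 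With that normalization fixed, your argument goes through as written.
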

\smallskip 

However important to the general theory, the Liouville bound must be improved to deduce many arithmetic applications (as over number fields). Unfortunately, unlike the number field case, the Liouville bound is in general strict in characteristic $p$; for instance, $E_v(\alpha)=p=d(\alpha)$ for $\alpha$ satisfying $\alpha^p-\alpha-t^{-1}=0$. Nevertheless, an improvement of the Liouville bound is still possible for certain algebraic functions, including those for which \emph{some} conjugates have non-constant cross-ratio:             
\smallskip  
\begin{theorem}[Osgood-Voloch]\label{thm:Osgood-Voloch} Let $K=\overline{\mathbb{F}}_p(t)$ and let $v=\ord_\infty(\cdot)$. If $\alpha\in\overline{K}\mysetminus K$ has degree $d(\alpha)=[K(\alpha):K]\geq4$ and $w\in \mathcal{M}_{K(\alpha)}$ is any extension of $v$, then there is a constant $C=C(\alpha,w)$ such that 
\begin{equation}\label{thm:osgoodbd}
w(\alpha-r)\leq \big(\lceil d(\alpha)/2\rceil+1\big)\cdot h(r)+C \;\;\;\;\; \text{for all $r\in K$},
\end{equation} 
unless $\alpha\in K^{\textup{sep}}$ and the cross-ratio of any four distinct conjugates of $\alpha$ is in $\overline{\mathbb{F}}_p$. In particular, $E_v(\alpha)\leq \lceil d(\alpha)/2\rceil+1$ for such $\alpha \in \overline{K}\mysetminus K$.  \vspace{.1cm}         
\end{theorem}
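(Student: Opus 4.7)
The approach follows the framework of Osgood as later refined by Voloch. The overall plan is to reduce to the separable case, exploit the non-constant cross-ratio hypothesis to extract a nontrivial first-order algebraic differential relation satisfied by $\alpha$, and then run a Thue--Mahler style construction in characteristic $p$ to convert that differential information into the desired approximation bound.

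First, if $\alpha \notin K^{\text{sep}}$ we are in the excluded case, so assume $\alpha$ is separable of degree $d \geq 4$ with four distinct conjugates $\alpha_{1},\alpha_{2},\alpha_{3},\alpha_{4}$ whose cross-ratio $\lambda = (\alpha_{1},\alpha_{2};\alpha_{3},\alpha_{4})$ is not in $\overline{\mathbb{F}}_p$. Extend the derivation $d/dt$ on $K$ to a derivation $D$ on $\overline{K}$. Since $\lambda$ is non-constant we have $D(\lambda) \neq 0$, and unwinding this in terms of the $\alpha_i$ and $D(\alpha_i)$ yields a non-zero polynomial relation $F(z,z')=0$ between $\alpha$ and $D(\alpha)$ with coefficients in $K$. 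The crucial feature, and the source of the improvement over the Liouville exponent $d$, is that the $z$-degree of $F$ can be controlled by $\lceil d/2\rceil$: the Schwarzian-type construction coming from cross-ratios naturally pairs conjugates and so cuts the degree roughly in half.

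Second, suppose for contradiction that there is an infinite sequence $\{r_n\} \subset K$ with $h(r_n)\to\infty$ and $w(\alpha-r_n) > (\lceil d/2\rceil+1)\,h(r_n)+C$ for every $C$. Evaluating the relation at $r_n$ and symmetrizing over $\Gal(K^{\text{sep}}/K)$ produces a non-zero element $B_n \in K$ whose $v$-adic valuation is at least roughly $w(\alpha-r_n)$ while its height is bounded by $\lceil d/2\rceil\cdot h(r_n)+O(h(D(r_n)))$. Using $h(D(r_n))\leq h(r_n)+O(1)$ together with the product formula on $K$ then gives $w(\alpha-r_n)\leq (\lceil d/2\rceil+1)\cdot h(r_n)+O(1)$, contradicting the assumption and yielding $E_v(\alpha)\leq \lceil d/2\rceil+1$.

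The main obstacle, and the reason this theorem lies much deeper than its number-field analogue, is guaranteeing that the auxiliary differential relation $F$ does not degenerate as a result of a Frobenius coincidence: in characteristic $p$ the $p$-th power map is what breaks Roth's theorem, and the entire role of the cross-ratio hypothesis is to prevent such a collapse. Making this rigorous requires showing that the relevant Wronskian built from the conjugates $\alpha_1,\dots,\alpha_d$ is non-zero in $\overline{K}$, which in turn needs a careful choice of separating transcendence basis for $K(\alpha_1,\dots,\alpha_d)$ over $\overline{\mathbb{F}}_p$. This non-vanishing is the hard technical core; once it is in hand, the diophantine conclusion follows from standard manipulations with the minimal polynomial of $\alpha$ and the product formula.
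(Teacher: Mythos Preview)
Your sketch has the logical roles reversed, and this creates a genuine gap. In the actual argument one works by contraposition: suppose \eqref{thm:osgoodbd} fails. Osgood's Theorem~III---the analytic core, proved independently of any cross-ratio hypothesis---then forces $\alpha\in K^{\text{sep}}$ and shows that $\alpha$ satisfies a \emph{Riccati} equation $D\alpha=a\alpha^{2}+b\alpha+c$ over $K$; moreover, infinitely many of the good approximants $r_n\in K$ satisfy the \emph{same} Riccati equation. The cross-ratio enters only now: the derivative of the cross-ratio of any four solutions of a common Riccati equation vanishes, so $(\alpha,r_1;r_2,r_3)$ is a $p$-th power, giving $L_1(\alpha)=\alpha_1^{p}$ for some $L_1\in\PGL_2(K)$ and $\alpha_1\in K(\alpha)$. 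Since the bound also fails for $\alpha_1$, one iterates to obtain $L_n(\alpha)=\alpha_n^{p^n}$; invariance of cross-ratios under $L_n^{-1}$ then forces every cross-ratio of four conjugates of $\alpha$ into $\bigcap_{n\geq1} L^{p^n}\subseteq\overline{\mathbb{F}}_p$. Thus the cross-ratio condition is the \emph{conclusion} drawn from the Riccati structure, not an input used to manufacture a differential relation.

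Your plan instead starts from $D(\lambda)\neq 0$ and claims to extract a relation $F(\alpha,D\alpha)=0$ with $\deg_z F\leq\lceil d/2\rceil$. But $D(\lambda)\neq 0$ is a condition involving four conjugates simultaneously, not a single-variable differential relation for $\alpha$, and there is no ``pairing'' mechanism that halves the $z$-degree as you assert; differentiating the minimal polynomial of $\alpha$ gives a relation of $z$-degree $d-1$, not $\lceil d/2\rceil$. The exponent $\lceil d/2\rceil+1$ comes from Osgood's Thue-type construction, not from any low-degree ODE handed to you by the cross-ratio hypothesis. So Step~1 of your outline does not produce the input your Step~2 needs, and the ``hard technical core'' you identify (Wronskian non-vanishing tied to the cross-ratio) is not the actual crux---the crux is Osgood's bound plus the Riccati-to-Frobenius iteration. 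A smaller slip: if $\alpha\notin K^{\text{sep}}$ you are \emph{not} in the excluded clause of the theorem (that clause requires $\alpha\in K^{\text{sep}}$), so the inseparable case still requires proof; it is precisely Osgood's theorem that covers it.
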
 
\begin{remark} In fact, a stronger result was obtained by Lasjaunias and de Mathan in \cite{DeMathan}; they prove the upper bound in \eqref{thm:osgoodbd} for all $\alpha\in \overline{K}\mysetminus K$ not satisfying a \emph{Frobenius equation}, i.e, $\alpha=(a\alpha^{p^n}+b)/(c\alpha^{p^n}+d)$ for some $a,b,c,d\in K$ and some $n\geq1$. Nevertheless, we use the formulation above with cross-ratios, since it more easily relates to equidistribtuion theorems in dynamics; for instance, see Proposition \ref{lem:badred2} above.          
\end{remark} 
\begin{remark}
Theorem \ref{thm:Osgood-Voloch} is essentially a restatement of \cite[Lemma 3.2]{DeMathan} or \cite[Theorem 3]{Voloch}, up to a slight change in setup: these results are stated for algebraic elements of a fixed completion $K_v$ of $K$. However, roughly speaking if an element $\alpha\in\overline{K}$ cannot be embedded in a completion at $v$, then it cannot be arbitrarily close to an element of $K$ for any extension $w|v$ (i.e., $w(\alpha-r)\leq C(\alpha)$ for all $r\in K$), and so the bound in \eqref{thm:osgoodbd} holds trivially in this case. The proof of Theorem \ref{thm:Osgood-Voloch} combines Osgood's original result \cite[Theorem III]{Osgood} (which is stated in the form we use above for all $d(\alpha)\geq2$) with an argument due to Voloch using cross-ratios. 
\end{remark} 
\begin{proof}[(Proof Sketch)] If $\alpha\in\overline{K}\mysetminus K$ is such that the bound in \eqref{thm:osgoodbd} fails, then \cite[Theorem III]{Osgood} implies that $\alpha\in K^{\text{sep}}$ and that $\alpha$ must satisfy a \emph{Ricatti equation} over $K$:  
\[\frac{d\alpha}{dt}=a\alpha^2+b\alpha+c\]
for some $a,b,c\in K$; here the derivative of $\alpha$ is given by implicit differentiation of it's minimal polynomial, which makes sense by separability. In fact, Osgood's proof gives additional information: there are infinitely many $r_n\in K$ all satisfying the same Ricatti equation as $\alpha$ above. In particular, there are three such solutions $r_1,r_2,r_3\in K$. However, as in the classical theory, the derivative of the cross ratio of any four solutions of the same Ricatti equation must be zero. In particular, the derivative of $(\alpha,r_1;r_2,r_3)\in K^{\text{sep}}$ must be zero. Hence, $(\alpha,r_1;r_2,r_3)$ is a $p$th power (see, for instance \cite[p.111]{Osgood}), and thus   
\[(\alpha,r_1;r_2,r_3)=\alpha_1^p\]
for some $\alpha_1\in K(\alpha)$. On the other hand, the function $z\rightarrow(z,r_1;r_2,r_3)$ is a linear fractional transformation $L_1(z)\in K(z)$, so that
$L_1(\alpha)=\alpha_1^p$ for some $\alpha_1\in K(\alpha)$ and some $L_1(z)\in K(z)$ of degree $1$. Moreover $d(\alpha)=d(\alpha_1)$, since 
\[K(\alpha_1)\subseteq K(\alpha)\subseteq K(\alpha_1^p)\subseteq K(\alpha_1).\]
In fact, the bound in \eqref{thm:osgoodbd} must also fail for $\alpha_1$; see \cite[p.3]{DeMathan} or \cite[p.5]{Voloch}. In particular, we may iterate the argument above to construct a sequence $\alpha_n\in K(\alpha)$ and $L_n\in K(z)$ of degree $1$ such that 
\[L_n(\alpha)=\alpha_n^{\,p^n}.\] 
Now assume that $d(\alpha)\geq 4$ and fix any distinct conjugates $\sigma_1(\alpha),\sigma_2(\alpha),\sigma_3(\alpha),\sigma_4(\alpha)$ of $\alpha$ for some $\sigma_i\in\Gal(K^{\text{sep}}/K)$. Then, since cross ratios are invariant under linear fractional transformations and the inverse $L_n^{-1}$ of $L_n$ has coefficients in $K$, we have that \vspace{.15cm}  
\begin{equation*} 
\begin{split} 
\big(\sigma_1(\alpha),\sigma_2(\alpha)\,;\sigma_3(\alpha),\sigma_4(\alpha)\big)&=\big(\,\sigma_1(L_n^{-1}(\alpha_n^{p_n})),\, \sigma_2(L_n^{-1}(\alpha_n^{p_n}))\,;\, \sigma_3(L_n^{-1}(\alpha_n^{p_n})),\, \sigma_4(L_n^{-1}(\alpha_n^{p_n}))\,\big)\\[5pt] 
&= \big(\sigma_1(\alpha_n),\sigma_2(\alpha_n)\,;\sigma_3(\alpha_n),\sigma_4(\alpha_n)\big)^{p^n} \\[2pt]   
\end{split} 
\end{equation*}
for all $n\geq1$. In particular, if we let $L=K(\sigma_1(\alpha),\sigma_2(\alpha),\sigma_3(\alpha),\sigma_4(\alpha))$, then the calculation above and the fact that $\alpha_n\in K(\alpha)$ for all $n$ together imply that \vspace{.1cm}  
\[ \big(\sigma_1(\alpha),\sigma_2(\alpha)\,;\sigma_3(\alpha),\sigma_4(\alpha)\big)\in \bigcap_{n\geq1}L^{p^n}.\] 
However, $\bigcap_{n\geq1}L^{p^n}\subseteq \overline{\mathbb{F}}_q$; to see this, note that if $u$ is any discrete valuation on $L$ and $\kappa\in \bigcap_{n\geq1}L^{p^n}$, then $u(\kappa)$ is divisible by $p^n$ for all $n\geq1$. Hence, $u(\kappa)=0$ for all $u$, and thus $\kappa$ must be a constant; see, for instance, \cite[Corollary 1.1.20]{FFieldsandCodes}.                  
\end{proof} 

\smallskip 

We now have the tools in place to give a quick proof of our bound on the diophantine approximation exponent of iterated preimages from the Introduction.

\smallskip 

\begin{proof}[(Proof of Theorem \ref{thm:eventualExp})] Let $\phi\in\overline{\mathbb{F}}_p(t)(z)$ be a non-isotrivial map of degree $\deg_z(\phi)\geq2$ and let $\gamma\in\mathbb{P}^1(\overline{\mathbb{F}}_p(t))$ be a non-postcritical point for $\phi$. Now let $K=\overline{\mathbb{F}}_p(t)$, let $v$ be the valuation on $K$ corresponding to the order vanish at infinity, and choose $m\gg0$ as in Theorem \ref{thm:crossratios}. Then for all $\alpha\in \phi^{-m}(\gamma)\mysetminus \mathbb{P}^1(K)$, either $d(\alpha)=[K(\alpha):K]\leq d^m/2$ or there exist some distinct $ \Gal(K^{\text{sep}}/K)$ conjugates of $\alpha$ with cross-ratio not in $\overline{\mathbb{F}}_p$. Then, $E_v(\alpha)\leq d^m/2$ in the first case by the Liouville bound; see Theorem \ref{thm:Liouville}. On the other hand, $E_v(\alpha)\leq \lceil d(\alpha)/2\rceil+1\leq \lceil d^m/2\rceil+1$ in the second case by Theorem \ref{thm:osgoodbd}; here we use also that $d(\alpha)\leq d^m$, since $\phi^m\in K(z)$ has degree $d^m$ and $\phi^m(\alpha)=\gamma\in K$. In particular, $E_v(\alpha)\leq \lceil d^m/2\rceil+1$ holds for all $\alpha\in\phi^{-m}(\gamma)$ as claimed.  
\end{proof} 
\section{Integrality estimates for forward orbits}\label{sec:int}
To prove our version of Silverman's Limit Theorem in characteristic $p$ from the Introduction, we combine the results in Section \ref{sec:equidistribution} and Section \ref{sec:diophantine} with some additional local analysis. With this in mind, we fix some notation. Let $K_v$ be a complete field and let $|\cdot|_v$ be the unique extension of the absolute value on $K_v$ to $\overline{K_v}$. Then we may define the \emph{chordal metric} on $\mathbb{P}^1(\overline{K_v})$ by \vspace{.1cm}  
\[\rho_v\big([x_0,x_1],[y_0,y_1]\big):=\frac{|x_0y_1-y_0x_1|_v}{\max\{|x_0|_v,|x_1|_v\}\,\max\{|y_0|_v,|y_1|_v\}} \vspace{.1cm}\]
for $[x_0,x_1],[y_0,y_1]\in\mathbb{P}^1(\overline{K_v})$. Likewise, we define an associated \emph{proximity function} given by \[\lambda_v(P,Q)=-\log\rho_v(P,Q)\] 
for distinct $P,Q\in \mathbb{P}^1(\overline{K_v})$. In particular, given a map $\phi\in K_v(z)$ and points $a,\gamma\in \mathbb{P}^1(\overline{K_v})$, then it is often useful to study the growth rate of the quantity $\lambda(\phi^n(a),\gamma)$ as $n$ grows; for instance, if $K_v$ is the completion of a product formula field $K$ and the points $a$ and $\gamma$ are $K$-rational, then $\lambda(\phi^n(a),\gamma)$ provides a way to measure the integrality of $\phi^n(a)$ relative to $\gamma$. In this vein, we have the following result in the case when $K= \overline{\mathbb{F}}_p(t)$, when $v=\ord_\infty(\cdot)$, and when $K_v$ is a completion of $K$ with respect to the absolute value $|\cdot|_v=p^{-v(\cdot)}$ on $K$.              

\medskip 
\begin{theorem}\label{thm:limit} Let $\phi\in \overline{\mathbb{F}}_p(t)(z)$ be non-isotrivial of degree at least two and let $v=\ord_\infty(\cdot)$. Moreover, assume that $\gamma\in \mathbb{P}^1(\overline{\mathbb{F}}_p(t))$ is not a post-critical point for $\phi$. Then   
\[\limsup_{n\rightarrow\infty}\frac{\lambda_v\big(\phi^n(a),\gamma\big)}{h\big(\phi^n(a)\big)}\leq \frac{1}{2}.\]
for all wandering $a\in \mathbb{P}^1(\overline{\mathbb{F}}_p(t))$.  
\end{theorem}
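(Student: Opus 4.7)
The plan is to make the sketch in Remark~\ref{rem:sketch} precise, combining Theorem~\ref{thm:eventualExp} with a non-archimedean inverse function theorem argument. Fix $\epsilon>0$ and choose $m\gg 0$ large enough that Theorem~\ref{thm:eventualExp} applies to every $\alpha\in\phi^{-m}(\gamma)\mysetminus\mathbb{P}^1(K)$ (writing $K=\overline{\mathbb{F}}_p(t)$ throughout) and that $(\lceil d^m/2\rceil+1)/d^m<1/2+\epsilon$. The crucial structural observation is that the hypothesis that $\gamma$ is not post-critical forces $\phi^m$ to be unramified at every $\alpha\in\phi^{-m}(\gamma)$: as in the proof of Lemma~\ref{lem:sep}, the multiplicativity of ramification indices would otherwise produce some $0\leq i<m$ with $\phi^i(\alpha)$ a critical point of $\phi$, rendering $\gamma=\phi^{m-i}(\phi^i(\alpha))$ post-critical. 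In the notation of Remark~\ref{rem:sketch} this is precisely the statement $e_{m,\gamma}=1$, which is what makes the argument possible.

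First I would dispose of the easy cases. Since $\phi$ is non-isotrivial and $a$ is wandering we have $\hat{h}_\phi(a)>0$ and hence $h(\phi^n(a))\to\infty$; in particular $\phi^n(a)=\gamma$ for at most finitely many $n$, so $\lambda_v(\phi^n(a),\gamma)$ is defined for $n\gg 0$. Along any subsequence on which $\phi^n(a)$ does \emph{not} $v$-adically cluster near $\gamma$ the numerator $\lambda_v(\phi^n(a),\gamma)$ is bounded while the denominator $h(\phi^n(a))\to\infty$, so the quotient already tends to $0$. Thus it suffices to consider $n$ for which $\phi^n(a)$ lies in a fixed small $v$-adic disk about $\gamma$ inside $\mathbb{P}^1(\mathbb{C}_v)$. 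For each such $n>m$ I would select a preimage $\alpha_{m,n}\in\phi^{-m}(\gamma)$---together with a compatible embedding $K(\alpha_{m,n})\hookrightarrow\mathbb{C}_v$, i.e.\ an extension $w\mid v$---so that $\phi^{n-m}(a)$ lies in a small disk around $\alpha_{m,n}$. Because $\phi^m$ is unramified at $\alpha_{m,n}$, the non-archimedean inverse function theorem produces an analytic inverse to $\phi^m$ on a disk about $\gamma$, giving the key local estimate
\[\lambda_v\bigl(\phi^n(a),\gamma\bigr)\leq w\bigl(\phi^{n-m}(a)-\alpha_{m,n}\bigr)+C_1(m),\]
where $C_1(m)$ depends on $m$, $\gamma$, and the local structure of $\phi^m$ near $\phi^{-m}(\gamma)$, but not on $n$.

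Next, apply Theorem~\ref{thm:eventualExp}: if $\alpha_{m,n}\notin\mathbb{P}^1(K)$ then, since $h(\phi^{n-m}(a))\to\infty$, for all sufficiently large $n$ we have
\[w\bigl(\phi^{n-m}(a)-\alpha_{m,n}\bigr)\leq\bigl(\lceil d^m/2\rceil+1+\epsilon\bigr)\cdot h\bigl(\phi^{n-m}(a)\bigr).\]
If instead $\alpha_{m,n}\in\mathbb{P}^1(K)$, then the elementary inequality $w(z)\leq h(z)$ for $z\in K^{*}$ together with the triangle inequality for heights yields the even stronger bound $w(\phi^{n-m}(a)-\alpha_{m,n})\leq h(\phi^{n-m}(a))+O_m(1)$, since $\phi^{-m}(\gamma)\cap K$ is a finite set. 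Combining either bound with the functoriality of canonical heights---$\hat{h}_\phi(\phi^n(a))=d^m\hat{h}_\phi(\phi^{n-m}(a))$ and $|h-\hat{h}_\phi|=O(1)$, whence $h(\phi^{n-m}(a))=d^{-m}h(\phi^n(a))+O(1)$---and dividing by $h(\phi^n(a))$, gives
\[\frac{\lambda_v\bigl(\phi^n(a),\gamma\bigr)}{h\bigl(\phi^n(a)\bigr)}\leq\frac{\lceil d^m/2\rceil+1+\epsilon}{d^m}+\frac{O_m(1)}{h\bigl(\phi^n(a)\bigr)}.\]
Taking $\limsup_{n\to\infty}$ kills the error term, and then letting $\epsilon\to 0$ and $m\to\infty$ gives the target bound $1/2$.

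The main obstacle I expect is the local estimate in the second paragraph: the constant $C_1(m)$ must be chosen uniformly in $n$, which requires a uniform non-archimedean inverse function theorem at each of the (finitely many) unramified preimages in $\phi^{-m}(\gamma)$. Closely related is the bookkeeping around the extension $w\mid v$, which must record the closeness of $\phi^{n-m}(a)$ to $\alpha_{m,n}$ and therefore can depend on $n$; however, since Theorem~\ref{thm:eventualExp} bounds $E_v(\alpha_{m,n})$ as a maximum over \emph{all} $w\mid v$, this dependence is harmless once $m$ is fixed. The Diophantine heart of the argument is Theorem~\ref{thm:eventualExp}; the local/dynamical steps sketched above are exactly what converts the factor $\lceil d^m/2\rceil+1$ from the improved Liouville bound into the factor $1/2$ appearing in the conclusion.
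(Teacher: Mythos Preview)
Your proposal is correct and takes essentially the same approach as the paper: the paper formalizes your inverse-function-theorem step via Lemma~\ref{lem:inv} (combined with Lemma~\ref{lem:comp} to pass from $\lambda_v$ to the valuation), splits into five explicit cases---including $\alpha_{m,n}=\infty$ and the non-clustering case you dispose of early---and invokes Theorem~\ref{thm:crossratios} together with Theorems~\ref{thm:Liouville} and~\ref{thm:Osgood-Voloch} directly rather than through the packaged Theorem~\ref{thm:eventualExp}. The endgame---canonical-height functoriality, divide by $h(\phi^n(a))$, send $m\to\infty$---is identical.
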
 
\medskip 
To prove this result, we combine ideas in \cite{me:JNT} and \cite{SilvInt}. In particular as a first step, we use the lemma below to control the proximity function under preimages of rational functions; this result may be viewed as a sort of multivalued inverse function theorem. In what follows, given a rational map $\varphi(z)\in K_v(z)$ and a point $\alpha\in\mathbb{P}^1(\overline{K}_v)$, we let $e_\varphi(\alpha)$ denote the ramification index of $\varphi$ at $\alpha$; see Remark \ref{rem:ramification} below. 
\medskip
\begin{lemma}\label{lem:inv} Let $(K_v,|\cdot|_v)$ be a complete field, let $\varphi(z)\in K_v(z)$ be a map of degree at least two, and let $\gamma\in\mathbb{P}^1(\overline{K}_v)$. Moreover, define   
\[\mathbf{e}_\varphi(\gamma):=\max_{\alpha\in\varphi^{-1}(\gamma)} e_\varphi(\alpha).\]
Then there is a positive constant $C=C(v,\varphi,\gamma)$ such that for all $P\in\mathbb{P}^1(\overline{K}_v)$ with $\varphi(P)\neq\gamma$ there exists a preimage $\alpha\in\varphi^{-1}(\gamma)$ (depending on $P$) satisfying \vspace{.1cm}   
\[\lambda_{v}\big(\varphi(P),\gamma\big)\leq \mathbf{e}_\varphi(\gamma)\cdot\lambda_{v}(P,\alpha)+C. \vspace{.1cm} \] 
\end{lemma}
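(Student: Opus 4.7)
The plan is to establish the bound locally near each preimage of $\gamma$ via a Taylor-type expansion of $\varphi$, and to dispose of the rest of $\mathbb{P}^1(\overline{K}_v)$ by boundedness. First I would change coordinates on both copies of $\mathbb{P}^1$ so that $\gamma$, every preimage $\alpha_1, \dots, \alpha_n \in \varphi^{-1}(\gamma)$, and every pole of $\varphi$ lies in $\mathbb{A}^1(\overline{K}_v)$, and so that $\varphi(\infty) \neq \gamma$. These changes act by fixed elements of $\PGL_2(\overline{K}_v)$ and alter both $\lambda_v$ and the affine absolute value only up to a bounded additive error, which can be absorbed into the final constant $C$.

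With this normalization, write $\varphi(z) - \gamma = c \prod_i (z - \alpha_i)^{e_i}/G(z)$, where $e_i = e_\varphi(\alpha_i)$ and $G$ is coprime to the numerator. Given $P \in \mathbb{A}^1(\overline{K}_v)$, let $\alpha_0 = \alpha_0(P) \in \varphi^{-1}(\gamma)$ minimize $|P - \alpha_0|_v$; this will be the preimage $\alpha$ featured in the conclusion. The essential case is when $|P - \alpha_0|_v < \min\{1,\, \min_{i \neq 0}|\alpha_0 - \alpha_i|_v\}$: the ultrametric inequality then forces $|P - \alpha_i|_v = |\alpha_0 - \alpha_i|_v$ for every $i \neq 0$, while $|G(P)|_v$ stays within a bounded factor of $|G(\alpha_0)|_v \neq 0$. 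Consequently
\[ |\varphi(P) - \gamma|_v \asymp |P - \alpha_0|_v^{e_0}, \]
with implicit constants depending only on $\alpha_0$, $\varphi$, and $\gamma$. Taking $-\log$, using $e_0 \leq \mathbf{e}_\varphi(\gamma)$ together with $-\log |P - \alpha_0|_v > 0$, and then converting the affine absolute value to the chordal proximity $\lambda_v$ (which differ by only a bounded additive term for points in a bounded affine region) will give the claimed inequality in this regime.

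In the remaining regimes—$P$ bounded away from every preimage of $\gamma$, or $|P|_v$ large—the image $\varphi(P)$ stays at positive chordal distance from $\gamma$ by continuity of $\varphi$ and the assumption $\varphi(\infty) \neq \gamma$. Hence $\lambda_v(\varphi(P), \gamma)$ is uniformly bounded and the inequality holds trivially after enlarging $C$. The main obstacle, as I see it, is the bookkeeping: the conversion between the affine absolute value and $\lambda_v$ near infinity, and tracking the effect of the initial coordinate change when $\gamma$ or one of its preimages happens to coincide with $\infty$. But since every coordinate change used acts by a fixed element of $\PGL_2(\overline{K}_v)$ and all relevant data (preimages, poles, coefficients of $\varphi$) are fixed in advance, these contributions can in the end be absorbed into $C$.
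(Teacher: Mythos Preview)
The paper does not actually prove this lemma: its ``proof'' consists solely of citations to \cite{InvFunThm} and \cite{SilvInt}. Your proposal is a direct argument, essentially the standard local expansion plus boundedness-away-from-preimages proof that underlies those references, and it is correct in outline.

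One point deserves care. Your treatment of the ``far from every preimage'' regime invokes continuity of $\varphi$ and $\varphi(\infty)\neq\gamma$ to conclude that $\rho_v(\varphi(P),\gamma)$ is bounded away from zero. This reads as a compactness argument, but $\mathbb{P}^1(\overline{K}_v)$ is not compact (nor is $\mathbb{P}^1(\mathbb{C}_v)$), so continuity alone does not give a uniform lower bound. The fix is already implicit in your setup: with your normalization one has $\deg N=\deg G=d$ where $\varphi(z)-\gamma=cN(z)/G(z)$, so for $|P|_v$ large the ratio $|N(P)|_v/|G(P)|_v$ tends to a nonzero constant, while for $|P|_v$ bounded and $|P-\alpha_i|_v\geq\delta$ one has $|N(P)|_v\geq\delta^d$ and $|G(P)|_v$ bounded above, with the near-pole case handled because large $|\varphi(P)|_v$ forces $\rho_v(\varphi(P),\gamma)$ close to $1$. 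This explicit computation replaces the compactness appeal and yields the uniform bound you need.
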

\begin{proof} This is done in greater generality for maps $\varphi: V\rightarrow W$ of varieties of dimension $1$ and \'{e}tale maps in \cite[Theorem 5.2]{InvFunThm}. On the other hand, a simpler version exists in \cite{SilvInt} for maps of $\mathbb{P}^1$, though this result is stated in characteristic zero.   
\end{proof}
\begin{remark}\label{rem:ramification} Let $L$ be any algebraically closed field, let $\varphi\in L(z)$, and let $\alpha\in\mathbb{P}^1(L)$. Then there
 are degree one rational functions $\sigma, \theta \in L(z)$ such that
 $\theta(0)=\alpha$ and $\sigma \circ \varphi \circ \theta (0) = 0$. Thus, we may write
 $\sigma \circ \varphi \circ \theta (z) = z^e g(z)$ for some rational function $g$
 such that $g(0) \not= 0$.  We call $e\geq1$ the \emph{ramification index}
 of $\varphi$ at $\alpha$ and denote it by
 $e_{\varphi}(\alpha)$. In particular, ramification indices are multiplicative: $e_{\psi\circ\phi}(\alpha)=e_\phi(\alpha)\cdot e_{\psi}(\phi(\alpha))$. Moreover, we say that $\alpha$ is a
 \emph{critical point} of $\varphi$ if
 $e_{\varphi}(\alpha)> 1$. In particular, it follows that if $\gamma$ is not in the forward orbit of a critical point (i.e., $\gamma$ is not \emph{post-critical}), then $e_{\phi^m}(\alpha)=1$ for all $\alpha\in\phi^{-m}(\gamma)$ and all $m\geq1$.        
\end{remark}   
\medskip
Likewise, we need an elementary lemma that compares $\lambda_v(x,y)$ to the usual distance $|x-y|_v$ for certain $x,y\in\overline{K}_v$; see \cite[Lemma 3.53]{SilvDyn} for a proof of the following.
\medskip  
\begin{lemma}{\label{lem:comp}} Let $x,y\in\overline{K}_v$ be distinct and suppose that $\lambda_{v}(x,\infty)+\log(2)\leq\lambda_{v}(x,y)$. Then \vspace{.1cm} 
\[\lambda_{v}(x,y)\leq -\log|x-y|_v+2\lambda_{v}(x,\infty)+\log2. \vspace{.1cm}\] 
\end{lemma}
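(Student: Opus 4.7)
The plan is to prove the lemma by direct computation: unpack the chordal metric on the affine chart versus the point at infinity, convert both hypothesis and conclusion into elementary inequalities between $|x|_v$, $|y|_v$, and $|x-y|_v$, and then finish with an application of the triangle inequality. Since both sides of the desired bound contain a common term $-\log|x-y|_v$, the whole lemma will boil down to a size comparison between $\max\{|y|_v,1\}$ and $\max\{|x|_v,1\}$.

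First I would write affine points of $\mathbb{P}^1$ in homogeneous coordinates as $[x,1]$, $[y,1]$, $\infty=[1,0]$, and read off from the definition of $\rho_v$ the formulas
\[
\rho_v(x,y)=\frac{|x-y|_v}{\max\{|x|_v,1\}\,\max\{|y|_v,1\}}\qquad\text{and}\qquad \rho_v(x,\infty)=\frac{1}{\max\{|x|_v,1\}}.
\]
Taking $-\log$, this becomes
\[
\lambda_v(x,y)= -\log|x-y|_v+\log\max\{|x|_v,1\}+\log\max\{|y|_v,1\},\qquad \lambda_v(x,\infty)=\log\max\{|x|_v,1\}.
\]
Substituting these into the desired conclusion and cancelling $-\log|x-y|_v$ and $\log\max\{|x|_v,1\}$ from both sides, the inequality to be proved is equivalent to
\[
\max\{|y|_v,1\}\leq 2\max\{|x|_v,1\}.
\]

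Next I would translate the hypothesis the same way. The assumption $\lambda_v(x,\infty)+\log 2\le\lambda_v(x,y)$ is the same as $\rho_v(x,y)\le \rho_v(x,\infty)/2$, which after substitution reduces to
\[
2|x-y|_v\le \max\{|y|_v,1\}.
\]
Now I split on the size of $|y|_v$. If $|y|_v\le 1$ then $\max\{|y|_v,1\}=1\le 2\le 2\max\{|x|_v,1\}$ and we are done. If instead $|y|_v>1$ then $\max\{|y|_v,1\}=|y|_v$, and the triangle inequality (which holds in both the archimedean and non-archimedean settings) yields
\[
|y|_v\le |x|_v+|x-y|_v\le |x|_v+\tfrac{1}{2}|y|_v,
\]
so $|y|_v\le 2|x|_v\le 2\max\{|x|_v,1\}$, again as required.

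There is no real obstacle here; the only delicate point is keeping track of which terms are $\max\{\cdot,1\}$ versus plain absolute values, and the entire argument is essentially a one-line computation once the definitions are unpacked. What makes the hypothesis exactly the right one is that $\lambda_v(x,\infty)+\log 2\le\lambda_v(x,y)$ forces $|x-y|_v$ to be at most half of $\max\{|y|_v,1\}$, which is precisely the slack needed to absorb $|y|_v$ into $|x|_v$ via the triangle inequality when $y$ is $v$-adically large.
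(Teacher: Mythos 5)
Your proof is correct and complete. Since $K_v$ is non-archimedean here (a completion of $\overline{\mathbb{F}}_p(t)$), you could even replace the ordinary triangle inequality $|y|_v\le|x|_v+|x-y|_v$ by the ultrametric inequality $|y|_v\le\max\{|x|_v,|x-y|_v\}$, which together with $|x-y|_v\le\tfrac12|y|_v<|y|_v$ forces $|y|_v\le|x|_v$ outright and gives a slightly sharper constant; but your version has the virtue of working verbatim in the archimedean case as well, which matches the spirit of the cited references.

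The paper itself does not reproduce a proof: it defers to Hsia--Silverman (and to Silverman's dynamics book, Lemma~3.53), remarking only that the argument there uses nothing beyond basic properties of absolute values and so carries over from number fields. Your computation is exactly the kind of direct unpacking of the chordal metric those sources carry out, so you have effectively supplied the argument the paper leaves to the reader. The one reduction you make explicit that is worth emphasizing --- that after cancelling the common $-\log|x-y|_v$ and $\log\max\{|x|_v,1\}$ terms, the hypothesis becomes $2|x-y|_v\le\max\{|y|_v,1\}$ and the conclusion becomes $\max\{|y|_v,1\}\le 2\max\{|x|_v,1\}$ --- is precisely the clean bookkeeping that makes the lemma transparent, and it is a good way to present it.
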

\begin{proof} See also \cite[Lemma 3]{Hsia-Silv}. This result is stated over number fields. However, the proof uses only basic properties of non-archimedean absolute values and so works more generally. 
\end{proof}  
\medskip 
We now have all of the tools in place to prove Theorem \ref{thm:limit}, which compares the local proximity of $\phi^n(a)$ to $\gamma$ with the height of $\phi^n(a)$.  
\begin{proof}[(Proof of Theorem \ref{thm:limit})] Let $K=\overline{\mathbb{F}}_p(t)$, let $v=\ord_\infty(\cdot)$ be the valuation corresponding to the order vanishing at infinity, and let $a\in \mathbb{P}^1(K)$ be a wandering point for a non-isotrivial map $\phi(z)\in K(z)$. Now fix a completion $K_v$ with respect to the absolute value $|\cdot|_v$ on $K$, and note that we may embed $\overline{K}\subseteq \overline{K_v}$ (e.g., by first embedding $K\subseteq K_v$). In particular if $\alpha\in\overline{K}$, then it is straightforward to check that $\bar{v}(\alpha)=-\log|\alpha|_v$ gives a valuation on $\overline{K}$ extending $v$.        

Now fix $m\gg0$ as in Theorem \ref{thm:crossratios} and apply Lemma \ref{lem:inv} to the map $\varphi=\phi^m\in K(z)\subseteq K_v(z)$. In particular, since $\gamma$ is not a post critical point for $\phi$ and ramification indices are multiplicative, we have that $\textbf{e}_\gamma(\phi^m)=1$; see Remark \ref{rem:ramification}. 
Hence, for all $n>m$ there is $\alpha_{m,n}\in\phi^{-m}(\gamma)\subseteq \mathbb{P}^1(K^{\text{sep}})$ satisfying \vspace{.05cm} 
\begin{equation}\label{eq:limit1}
\lambda_v(\phi^n(a),\gamma)=\lambda_v(\phi^m(\phi^{n-m}(a)), \gamma)\leq \lambda_v(\phi^{n-m}(a),\alpha_{n,m})+C_1(v,\phi,m) \vspace{.05cm} 
\end{equation}       
by Lemma \ref{lem:inv}. From here, we proceed in cases.   
\\[5pt] 
\textbf{Case (1):} Suppose that $\alpha_{m,n}=\infty=[1,0]\in\mathbb{P}^1(K)\mysetminus K$ and write $\phi^n(a)=a_n(t)/b_n(t)$ for some coprime polynomials $a_n(t),b_n(t)\in \overline{\mathbb{F}}_p[t]$. Then \eqref{eq:limit1} implies that  \vspace{.2cm}  
\begin{equation}{\label{eq:Case2}} 
\begin{split} 
\lambda_v(\phi^n(a),\gamma)&\leq \lambda_v(\phi^{n-m}(a),\infty)+ C_1(v,\phi,m)\\[8pt]
&=-\log\bigg(\frac{|b_{n-m}|_v}{\max\big\{|a_{n-m}|_v,|b_{n-m}|_v\big\}}\bigg)+C_1(v,\phi,m)\\[8pt]
&=\max\{\deg(a_{n-m}),\deg(b_{n-m})\}-\deg(b_{n-m})+C_1(v,\phi,m).\\[5pt] 
&\leq h(\phi^{n-m}(a))+ C_1(v,\phi,m)
\end{split} 
\end{equation}
for all $n>m$. 
\\[8pt] 
\noindent\textbf{Case(2):} Suppose that $\alpha_{m,n}\in \overline{K}$ and that the bound 
\begin{equation}\label{eq:limit2}
\lambda_v(\phi^{n-m}(a),\alpha_{n,m})\leq \lambda_v(\alpha_{m,n},\infty)+\log2
\end{equation}   
holds. Then \eqref{eq:limit1} and \eqref{eq:limit2} together imply that  \vspace{.05cm} 
\begin{equation} \label{eq:Case1} 
\begin{split} 
\lambda_v(\phi^n(a),\gamma)&\leq\lambda_v(\phi^{n-m}(a),\alpha_{n,m})+C_1(v,\phi,m) \\[5pt] 
&\leq\max_{\alpha\in\phi^{-m}(\gamma)}\lambda_v(\alpha,\infty)+\log2+C_1(v,\phi,m)\\[5pt] 
&\leq C_2(v,\phi,m) 
\end{split} 
\end{equation} 
for some constant $C_2(v,\phi,m)$ and all $n>m$.
\\[8pt]
\textbf{Case (3):} Suppose that $\alpha_{m,n}\in \overline{K}\mysetminus K$, that $d(\alpha_{m,n})=[K(\alpha):K]> d^m/2$, and that the bound in \eqref{eq:limit2} is false. In particular, \vspace{.1cm} 
\[ \lambda_v(\alpha_{m,n},\infty)+\log2\leq\lambda_v(\phi^{n-m}(a),\alpha_{n,m})=\lambda_v(\alpha_{n,m}, \phi^{n-m}(a)). \vspace{.05cm} \]
Then Lemma \ref{lem:comp} applied to $x=\alpha_{m,n}$ and $y=\phi^{n-m}(a)$ implies that \vspace{.1cm}  
\begin{equation}\label{eq:limit4}
\begin{split} 
\lambda_v(\phi^{n-m}(a),\alpha_{n,m})&\leq -\log|\alpha_{m,n}-\phi^{n-m}(a)|+2\lambda_v(\alpha_{m,n},\infty)+\log2 \\[5pt]
&=\bar{v}(\alpha_{m,n}-\phi^{n-m}(a))+\max_{\alpha\in\phi^{-m}(\gamma)}2 \lambda_v(\alpha,\infty)+\log2 \\[5pt] 
&=\bar{v}(\alpha_{m,n}-\phi^{n-m}(a))+C_3(v,\phi,m). 
\end{split} 
\end{equation}            
Hence, \eqref{eq:limit1} and \eqref{eq:limit4} together imply that \vspace{.05cm} 
\begin{equation}\label{eq:limit5}
\lambda_v(\phi^n(a),\gamma)\leq \bar{v}(\alpha_{m,n}-\phi^{n-m}(a))+C_4(v,\phi,m)
\end{equation} 
for some constant $C_4(v,\phi,m)$ and all $n>m$. On the other hand, Theorem \ref{thm:crossratios} and Theorem \ref{thm:Osgood-Voloch} (applied to the valuation $w\in M_{K(\alpha_{m,n})}$ given by the restriction of $\bar{v}$ to $K(\alpha_{m,n})\subseteq\overline{K}$) together imply that  
\begin{equation}\label{eq:limit6}
\bar{v}(\alpha_{m,n}-\phi^{n-m}(a))\leq \big(\lceil d(\alpha_{m,n})/2\rceil+1\big)\cdot h(\phi^{n-m}(a))+C(\alpha_{m,n})  
\end{equation}
by construction of $m$. Therefore, \eqref{eq:limit5} and \eqref{eq:limit6} together imply that \vspace{.15cm}
\begin{equation}\label{eq:Case3}
\lambda_v(\phi^n(a),\gamma)\leq \big( \lceil d^m/2\rceil+1\big) \cdot h(\phi^{n-m}(a))+ C_5(v,\phi,m)\vspace{.15cm}
\end{equation} 
for some $C_5(v,\phi,m)$ and all $n>m$. Here we use that $d(\alpha_{m,n})\leq d^m$ since $\alpha_{m,n}\in\phi^{-m}(\gamma)$ and $\phi$ has degree $d$.
\\[8pt]
\textbf{Case (4):} Suppose that $\alpha_{m,n}\in \overline{K}\mysetminus K$, that $d(\alpha_{m,n})=[K(\alpha):K]\leq d^m/2$, and that the bound in \eqref{eq:limit2} is false. Then, repeating the argument at the beginning of Case (3), we see that 
\[\lambda_v(\phi^n(a),\gamma)\leq\bar{v}(\alpha_{m,n}-\phi^{n-m}(a))+C_4(v,\phi,m).\]
On the other hand, the Liouville bound in Theorem \ref{thm:Liouville} implies that 
\[\bar{v}(\alpha_{m,n}-\phi^{n-m}(a))\leq d(\alpha_{m,n})h(\phi^{n-m}(a))+ C_6(v,\phi,m)\] for some constant $C_6(v,\phi,m)$. In particular,  
\begin{equation}\label{eq:Case4}
\lambda_v(\phi^n(a),\gamma)\leq \frac{d^m}{2} \cdot h(\phi^{n-m}(a))+ C_7(v,\phi,m)\vspace{.15cm}
\end{equation} 
for all $n>m$ and some constant $C_7(v,\phi,m)$.\\[5pt] 
\textbf{Case(5):} Finally, suppose that $\alpha_{m,n}\in K=\overline{\mathbb{F}}_p(t)$ and that the bound in \eqref{eq:limit2} is false. In particular, repeating the argument at the beginning of Case (3), we see that 
\[\lambda_v(\phi^n(a),\gamma)\leq v(\alpha_{m,n}-\phi^{n-m}(a))+C_4(v,\phi,m).\]
On the other hand, it is straightforward to check that $v(A-B)\leq \max\{h(A),h(B)\}$ for all distinct $A,B\in\overline{\mathbb{F}}_p(t)$. In particular, since $h(\phi^n(a))\rightarrow\infty$ as $a$ is wandering and $\phi$ is non-isotrivial (see also \cite[Corollary 1.8]{Baker}), we have that 
\begin{equation}
\label{eq:Case5}
\lambda_v(\phi^n(\alpha),\gamma)\leq h(\phi^{n-m}(a))+ C_4(v,\phi,m) 
\end{equation} 
for all $n>m$ sufficiently large.  
\\[7pt] 
In summation, if we let $C_8(v,\phi,m)$ be the max of $C_i(v,\phi,m)$ for $i=1,2,4,5,7$ then \eqref{eq:Case1}, \eqref{eq:Case2}, \eqref{eq:Case3}, \eqref{eq:Case4} and  \eqref{eq:Case5} together imply that     
\begin{equation}\label{summary}
\lambda_v(\phi^n(a),\gamma)\leq \Big(\frac{d^m}{2}+2\Big)\cdot h(\phi^{n-m}(\alpha))+ C_8(v,\phi,m)
\end{equation} 
for all $n>m$, independent of what case the points $\alpha_{m,n}\in\phi^{-m}(\gamma)\subseteq\mathbb{P}^1(\overline{K})$ fall into. Note that we may also assume that $h(\phi^{n-m}(a))\geq1$ since $\alpha$ is wandering. In fact, $h(\phi^n(\alpha))\rightarrow+\infty$ by \cite[Corollary 1.8]{Baker}.  

On the other hand, the canonical height $\hat{h}_\phi:\mathbb{P}^1(\overline{K})\rightarrow \mathbb{R}_{\geq0}$ given by $\hat{h}_\phi=\displaystyle{\lim_{n\rightarrow\infty}\frac{ h(\phi^n(P))}{d^n}}$ satisfies two important properties: \vspace{.15cm} 
\[
\text{(A).\; $|h_\phi-h|\leq C_\phi$}\;\;\;\;\;\; \text{and}\;\;\;\;\;\; \text{(B).\; $\hat{h}_\phi(\phi^n(P))=d^n\hat{h}_\phi(P)$} \vspace{.15cm}
\]
for some constant $C_\phi$; see \cite[\S3.4]{SilvDyn}. In particular, it is straightforward to check that 
\begin{equation}\label{eq:limit8}
h(\phi^{n-m}(a))\leq\frac{1}{d^m}h(\phi^n(a))+(1+d^{-m})C_\phi.  
\end{equation} 
Hence, \eqref{summary} and \eqref{eq:limit8} together imply that \vspace{.1cm}   
\begin{equation*}
\lambda_v(\phi^n(a),\gamma)\leq\frac{1}{2}h(\phi^n(a))+\frac{2}{d^m}h(\phi^n(a))+C_9(v,\phi,m)
\end{equation*}
for some constant $C_9(v,\phi,m)$. In particular, dividing both sides of the inequality above by $h(\phi^n(a))$ we see that   
\begin{equation}\label{eq:limit10} 
\limsup_{n\rightarrow\infty}\frac{\lambda_v(\phi^n(a),\gamma)}{h(\phi^n(a))}\leq \frac{1}{2}+\frac{2}{d^m}. \vspace{.1cm} 
\end{equation}        
Here we use again that $h(\phi^n(\alpha))$ tends to infinity. On the other hand, the bound in \eqref{eq:limit10} is true for all $m\gg0$ (recall that $m$ was chosen so that Theorem \ref{thm:crossratios} holds). Therefore, $\limsup \lambda_v(\phi^n(a),\gamma)/h(\phi^n(a))\leq 1/2$ as claimed. 
\end{proof} 
\medskip 
As an application on Theorem \ref{thm:limit}, we prove the weak form of Silverman's Limit Theorem in characteristic $p$ from the Introduction; see Theorem \ref{thm:basic}. In fact, we establish the following stronger statement:   
\medskip
\begin{corollary}\label{cor:a_nandb_n} Suppose that $\phi\in \overline{\mathbb{F}}_p(t)(z)$ is non-isotrivial. Moreover, given a wandering point $a\in\mathbb{P}^1(\overline{\mathbb{F}}_p(t))$, write $\phi^n(a)=a_n/b_n$ for some coprime $a_n,b_n\in\overline{\mathbb{F}}_p[t]$. Then the following statements hold. \vspace{.1cm}  
\begin{enumerate}
\item[\textup{(1)}] If $\infty$ is not postcritical for $\phi$, then \[\limsup_{n\rightarrow\infty} \frac{\deg(a_n)}{\deg(b_n)}\leq2.\vspace{.2cm}\]    
\item[\textup{(2)}] If both $0$ and $\infty$ are not postcritical for $\phi$, then 
\[\frac{1}{2}\leq \liminf_{n\rightarrow\infty} \frac{\deg(a_n)}{\deg(b_n)} \leq\limsup_{n\rightarrow\infty} \frac{\deg(a_n)}{\deg(b_n)}\leq2. \vspace{.2cm} \]
\end{enumerate}
\end{corollary}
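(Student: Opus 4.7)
The plan is to deduce the corollary by direct inspection from Theorem \ref{thm:limit}, applied successively at $\gamma=\infty$ and $\gamma=0$; the entire remaining work is a formal translation between the proximity function $\lambda_v$ and the degree data $\deg(a_n),\deg(b_n)$.

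First, writing $\phi^n(a)=[a_n,b_n]$ with $a_n,b_n\in\overline{\mathbb{F}}_p[t]$ coprime and using $|f|_v=p^{\deg f}$, the definition of the chordal metric gives, exactly as in \eqref{eq:Case2},
\[
\lambda_v(\phi^n(a),\infty)=\max\{\deg(a_n),\deg(b_n)\}-\deg(b_n),
\]
\[
\lambda_v(\phi^n(a),0)=\max\{\deg(a_n),\deg(b_n)\}-\deg(a_n),
\]
together with $h(\phi^n(a))=\max\{\deg(a_n),\deg(b_n)\}$.

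For part (1), since $\infty$ is not post-critical, Theorem \ref{thm:limit} applied with $\gamma=\infty$ yields, for every $\epsilon>0$ and all $n$ sufficiently large,
\[
\deg(b_n)\geq\Big(\tfrac{1}{2}-\epsilon\Big)\,h(\phi^n(a)).
\]
Because $a$ is wandering and $\phi$ is non-isotrivial, $h(\phi^n(a))\to\infty$ by \cite[Corollary 1.8]{Baker}, so $\deg(b_n)\to\infty$ and the ratio $\deg(a_n)/\deg(b_n)$ is well-defined for all large $n$. Splitting into the cases $\deg(a_n)\geq\deg(b_n)$ and $\deg(a_n)<\deg(b_n)$, the display above either becomes $\deg(b_n)\geq(\tfrac{1}{2}-\epsilon)\deg(a_n)$ or is trivially satisfied, respectively; in either case $\deg(a_n)/\deg(b_n)\leq 1/(\tfrac{1}{2}-\epsilon)$ for $n\gg 0$. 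Letting $\epsilon\to 0$ establishes $\limsup\deg(a_n)/\deg(b_n)\leq 2$.

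Part (2) follows by the symmetric argument: since $0$ is also not post-critical, Theorem \ref{thm:limit} applied with $\gamma=0$ yields $\deg(a_n)\geq(\tfrac{1}{2}-\epsilon)h(\phi^n(a))$ for $n\gg 0$, and an identical case analysis gives $\liminf\deg(a_n)/\deg(b_n)\geq 1/2$. The main obstacle is entirely absorbed into Theorem \ref{thm:limit}; once that result is available, the passage to the degree statement is a matter of unwinding the chordal metric. The only subtlety worth flagging is verifying that $\deg(b_n)$ (and, in part (2), also $\deg(a_n)$) is eventually positive, which is automatic from the lower bound combined with $h(\phi^n(a))\to\infty$.
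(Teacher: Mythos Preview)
Your proof is correct and follows essentially the same approach as the paper: both unpack $\lambda_v(\phi^n(a),\infty)$ and $\lambda_v(\phi^n(a),0)$ in terms of $\deg(a_n),\deg(b_n)$, apply Theorem \ref{thm:limit}, and finish with elementary algebra on the resulting ratio. The only cosmetic difference is that the paper phrases the endgame as a contradiction (assuming $\deg(a_n)/\deg(b_n)>2+\epsilon$ and showing $n$ is bounded), whereas you argue directly that $\deg(a_n)/\deg(b_n)\leq 1/(\tfrac12-\epsilon)$ for large $n$; these are equivalent.
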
 

\begin{proof}
If $\phi^n(a)=a_n(t)/b_n(t)$ for some coprime $a_n(t),b_n(t)\in \mathbb{F}_q[t]$, then \vspace{.2cm}  
\begin{equation}{\label{eg:infty}} 
\begin{split} 
\lambda_v(\phi^n(a),\infty)&=-\log\bigg(\frac{|b_n|_v}{\max\big\{|a_n|_v,|b_n|_v\big\}}\bigg)=\log\max\big\{|a_n|_v,|b_n|_v\big\}-\log(|b_n|_v)\\[8pt]
&=\max\{\deg(a_n),\deg(b_n)\}-\deg(b_n).\\[5pt] 
\end{split} 
\end{equation} 
On the other hand, $h(\phi^n(a))=\max\{\deg(a_n),\deg(b_n)\}$. Therefore, if $\infty$ is not post-critical, then Theorem \ref{thm:limit} and (\ref{eg:infty}) imply that  \vspace{.1cm} 
 \begin{equation}{\label{eg2:infty}}
\limsup_{n\rightarrow\infty}\,\frac{\max\{\deg(a_n),\deg(b_n)\}-\deg(b_n)}{\max\{\deg(a_n),\deg(b_n)\}}\leq\frac{1}{2}.\\[5pt]
\end{equation} 
Now let $\epsilon>0$ and suppose that $n$ is such that $\frac{\deg(a_n)}{\deg(b_n)}>2+\epsilon$. We will show that $n$ is bounded, and since $\limsup \frac{\deg(a_n)}{\deg(b_n)}$ is the smallest real number with this property, it follows that   
\[\limsup_{n\rightarrow\infty}\,\frac{\deg(a_n)}{\deg(b_n)}\leq 2. \vspace{.1cm} \]
For such $n$, we see that $\max\{\deg(a_n),\deg(b_n)\}=\deg(a_n)$, and thus \vspace{.1cm}  
\[\frac{\max\{\deg(a_n),\deg(b_n)\}}{\deg(b_n)}>2+\epsilon. \vspace{.1cm}  \]
In particular, with a little algebra we see that \vspace{.25cm} 
\[
\Scale[1.2]{
\frac{\max\{\deg(a_n),\deg(b_n)\}-\deg(b_n)}{\max\{\deg(a_n),\deg(b_n)\}}=1-\frac{\deg(b_n)}{\max\{\deg(a_n),\deg(b_n)\}}>\frac{-1}{2+\epsilon}+1=\frac{1}{2}+\frac{\epsilon}{2(2+\epsilon)}.} 
\vspace{.25cm}\]
On the other hand, since the remainder $\frac{\epsilon}{2(2+\epsilon)}$ is positive, (\ref{eg2:infty}) implies that $n$ is absolutely bounded. Therefore, we obtain the desired upper bound in Corollary \ref{cor:a_nandb_n}. Likewise, if $0$ is not post-critical for $\phi$, then we compute that \\[3pt]  
\begin{equation}\label{eg1:zero}
\limsup_{n\rightarrow\infty}\,\frac{\max\{\deg(a_n),\deg(b_n)\}-\deg(a_n)}{\max\{\deg(a_n),\deg(b_n)\}}=\limsup_{n\rightarrow\infty}\,\frac{\lambda_v(\phi^n(\alpha),0)}{h(\phi^n(\alpha))}\leq\frac{1}{2}. \vspace{.2cm}
\end{equation}
Let $0<\epsilon<\frac{1}{2}$ and suppose that $n$ is such that $\frac{\deg(a_n)}{\deg(b_n)}<\frac{1}{2}-\epsilon$. We will show that $n$ is bounded, and since $\liminf \frac{\deg(a_n)}{\deg(b_n)}$ is the largest real number with this property, it follows that \vspace{.1cm} 
\[\frac{1}{2}\leq \liminf_{n\rightarrow\infty}\,\frac{\deg(a_n)}{\deg(b_n)} \vspace{.2cm}\]
as claimed. Clearly, $\max\{\deg(a_n),\deg(b_n)\}=\deg(b_n)$ for such $n$, so that \\[4pt] 
\[\frac{\deg(a_n)}{\max\{\deg(a_n),\deg(b_n)\}}<\frac{1}{2}-\epsilon.\]
In particular, we see that \\[4pt] 
\[\frac{\max\{\deg(a_n),\deg(b_n)\}-\deg(a_n)}{\max\{\deg(a_n),\deg(b_n)\}}=1-\frac{\deg(a_n)}{\max\{\deg(a_n),\deg(b_n)\}}\geq\frac{1}{2}+\epsilon \vspace{.25cm}\]   
Hence, (\ref{eg1:zero}) implies that $n$ is absolutely bounded as claimed. 
\end{proof} 

\bigskip

\end{document}